\documentclass[a4paper,11pt]{article}

\usepackage{soul}
\usepackage{comment}
\usepackage[a4paper,margin=2.8cm]{geometry}
\usepackage[utf8]{inputenc}
\usepackage{amstext}
\usepackage{amsfonts}
\usepackage{amsthm}
\usepackage{textcomp}
\usepackage{amssymb}
\usepackage{amscd}
\usepackage{amsmath}
\usepackage{xcolor}
\usepackage{tikz-cd}
\usepackage{caption}
\usepackage{enumitem}
\usepackage{theoremref}
\usepackage{mathtools}
\usepackage{float}
\usepackage{etoc}
\usepackage{stmaryrd}
\setlist[enumerate]{label=({\arabic*})}

\theoremstyle{theorem}
\newtheorem{thm}{Theorem}[section]

\newtheorem{prop}[thm]{Proposition}
\newtheorem{cor}[thm]{Corollary}

\theoremstyle{definition}
\newtheorem{defn}[thm]{Definition}
\newtheorem{exmp}[thm]{Example}
\newtheorem{rmk}[thm]{Remark}

\allowdisplaybreaks

\newcommand{\N}{\mathbb{N}}
\newcommand{\R}{\mathbb{R}}
\newcommand{\Z}{\mathbb{Z}}
\newcommand{\Q}{\mathbb{Q}}
\newcommand{\K}{\mathbb{K}}

\newcommand{\B}{{\mathcal B}}

\newcommand{\eps}{{\varepsilon}}
\newcommand{\esssup}{{\mathrm{ess\,sup}}}
\newcommand{\essinf}{{\mathrm{ess\,inf}}}

\newcommand{\Per}{\operatorname{Per}}
\newcommand{\PER}{\operatorname{PER}}
\newcommand{\card}{\operatorname{card}}
\newcommand{\ldens}{\operatorname{\underline{dens}}}

\DeclareRobustCommand{\rchi}{{\mathpalette\irchi\relax}}
\newcommand{\irchi}[2]{\raisebox{\depth}{$#1\chi$}}

\date{\empty}

\begin{document}

\title{The specification property for composition operators}
\author{Nilson C. Bernardes Jr., Emma D'Aniello, Martina Maiuriello}

\newcommand{\Addresses}{{
\bigskip
\footnotesize

\noindent
N.~C.~Bernardes~Jr.\\  
\textsc{Departamento de Matem\'atica Aplicada,\\
Universidade Federal do Rio de Janeiro,\\
Caixa Postal 68530, Rio de Janeiro, RJ, 21941-909, BRAZIL}\\
\textit{E-mail address: \em ncbernardesjr@gmail.com}

\medskip\noindent  
E.~D'Aniello,\\
\textsc{Dipartimento di Matematica e Fisica,\\ 
Universit\`a degli Studi della Campania ``Luigi Vanvitelli",\\
Viale Lincoln n. 5, 81100 Caserta, ITALIA}\\
\textit{E-mail address: \em emma.daniello@unicampania.it} 

\medskip\noindent
M.~Maiuriello,\\
\textsc{Dipartimento di Scienze Umane,\\ Universit\`a degli Studi ``Link Campus",\\
Via del Casale San Pio V, 44, 00165 Roma, ITALIA}\\
\textit{E-mail address: \em m.maiuriello@unilink.it} 

}}

\maketitle

\begin{abstract}
It is known that the operator specification property does not coincide, in general, with other notions of chaos in linear dynamics. 
In this paper, we show that, for a natural and widely studied class of linear operators, namely the dissipative composition operators of bounded distortion (including the class of weighted shifts), specification-type properties do not introduce new dynamical behaviors: 
they coincide with classical notions of chaos. 
We also extend these equivalences with conditions on periodic points and examine in detail the special case of the translation operator $T_{\alpha}$ acting on $L^p(\R,\B,\mu)$, where the measure $\mu$ is induced by a density function. 
Moreover, we show that, in general, there is no relationship between the operator specification property and the shadowing property in the linear context.
Finally, we analyse the important role played by the bounded distortion hypothesis.
\end{abstract}

\let\thefootnote\relax\footnote{2020 {\em Mathematics Subject Classification:} Primary: 47A16, 37B65; Secondary: 47B33, 47B37.\\
{\em Keywords:} Specification Property, Devaney Chaos, Frequent Hypercyclicity, Periodic Points, Composition Operators, Weighted Shifts.}

\begin{center}
\etocsetnexttocdepth{1} 
\etocsettocstyle{\small}{}
\tableofcontents
\end{center}

\section{Introduction}

Properties related to the concept of pseudotrajectory, such as the shadowing property and the specification property, 
play a fundamental role in the modern theory of dynamical systems and differential equations. 
A detailed study of the shadowing property for dissipative composition operators on $L^p$ spaces was conducted in the article 
\cite{DAnielloDarjiMaiuriello} by the second and third authors, in collaboration with Udayan Darji. 
Our main objective in this work is to carry out a detailed investigation of the specification property for this class of operators.

The Specification Property (SP) was originally introduced by Bowen \cite{Bowen1970,Bowen1971} 
in the context of Axiom~A diffeomorphisms.
It has been extensively investigated in the context of compact dynamical systems and has played a central role in topological dynamics, 
as it describes an intriguing mechanism for orbit approximation and is typically associated with chaotic behaviors: 
it implies topological mixing and positive topological entropy \cite{Bowen1970,Sigmund1974}, 
thus being a manifestation of chaotic behavior.

In the context of linear dynamics, where the space under consideration is typically non-compact and infinite-dimensional, 
the classical definition must be suitably adapted, leading to the notions of Operator Specification Property (OSP) and 
Operator Strong Specification Property (OSSP) for continuous linear operators acting on separable $F$-spaces,
introduced and investigated by Bartoll, Mart\'inez-Gim\'enez, Murillo-Arcila and Peris in the series of articles 
\cite{Bartoll2014,Bartoll2012,Bartoll2016}. 

In this article, we analyze the OSP and the OSSP for two fundamental classes of operators: 
bilateral weighted backward shifts on $c_0(\Z)$ and $\ell^p(\Z)$, 
and dissipative composition operators of bounded distortion on $L^p(X,\B,\mu)$ ($1 \leq p < \infty$).
The dynamics of composition operators and, in particular, weighted shifts, has been extensively investigated by many authors. 
We refer the reader to the recent articles \cite{AJK2025,BBP2026,GG2025}, where numerous references to previous works can be found.

Our first main result (Theorem~\ref{thmMAINBw}) establishes a complete chain of equivalences for bilateral weighted backward shifts
$B_w$ on sequence $F$-spaces, combining previously known results with new ones: 
it shows that a certain summability condition on the weight sequence is equivalent to Devaney chaos, the OSP, the OSSP, 
the existence of a nontrivial fixed point, and the existence of nontrivial periodic points of every period.
This general result is applied to weighted shifts on $\ell^p(\Z)$ (Corollary~\ref{CorBwlp}) and on $c_0(\Z)$ (Corollary~\ref{CorBwc0}). 
In the case of the spaces $\ell^p(\Z)$, we can add ``frequent hypercyclicity'' to the list of equivalences,
but this is not true in the case of the space $c_0(\Z)$ (Remark~\ref{Remarkc0}).
We also present an analogous result for unilateral weighted backward shifts on sequence $F$-spaces
(Theorem~\ref{thmMAINBwUnilateral}).

Next, we show that, in general, there is no relationship between the operator specification property and the shadowing property
in the setting of linear dynamics (Theorem~\ref{OSP-Shad}).

We then extend the previous analysis on weighted shifts to dissipative composition operators of bounded distortion on $L^p(X,\B,\mu)$, 
where $\mu$ is $\sigma$-finite and $1 \leq p < \infty$.
By exploiting their relation with associated bilateral weighted backward shifts via factor maps, 
we prove that the property of having the OSP or the OSSP is shared by the two operators, 
through the shift-like correspondence provided in Theorem \ref{thmshiftlike}. 
As a consequence, we obtain an analogous result for these composition operators, namely, Theorem \ref{thmMAINT_f}: 
Devaney chaos, frequent hypercyclicity, the OSP, the OSSP, the existence of a nontrivial fixed point, 
and the existence of nontrivial periodic points of every period are all equivalent to the measure $\mu$ being finite.
As a special case, under suitable assumptions, we provide a concrete characterization of specification-type behaviors 
for the translation operator $T_{\alpha}$ acting on $L^p(\R,\B,\mu)$, 
where the measure $\mu$ is induced by a density function (Corollary~\ref{thmMAINTa}). 

In \cite{Bartoll2016}, the authors show that there are operators defined on the separable Hilbert space which are 
topologically mixing, Devaney chaotic and frequently hypercyclic, but do not have the OSP. 
Our results, on the other hand, show that, for all the above mentioned classes of linear operators, 
specification-type properties do not generate new dynamical phenomena beyond those already captured 
by classical notions of linear chaos. 

Finally, we enrich the theory with examples and begin to investigate the importance of the bounded distortion hypothesis 
in the study of dynamical properties of dissipative composition operators. 

\section{Preliminaries}

Throughout the article, $\K$ denotes either the field $\R$ of real numbers or the field $\mathbb{C}$ of complex numbers,
$\Z$ denotes the ring of integers, $\N$ denotes the set of all positive integers, and $\N_0 = \N \cup \{0\}$.

All topological vector spaces (in particular, $F$-spaces, Fr\'echet spaces and Banach spaces) considered in this work 
are assumed to be over $\K$, unless otherwise specified.
By an {\em operator} on a topological vector space $X$, we mean a continuous linear map $T : X \to X$.
Such an operator $T$ is said to be {\em invertible} if it has a continuous inverse $T^{-1} : X \to X$.

Given a set $X$ and a map $f : X \to X$, recall that $x \in X$ is said to be a {\em periodic point} of $f$ if 
$f^n(x) = x$ for some $n \in \N$.
In this case, the smallest $q \in \N$ for which $f^q(x) = x$ is called the {\em period} of $x$.
We denote by $\PER_q(f)$ the set of all $x \in X$ such that $f^q(x) = x$,
and by $\Per_q(f)$ the set of all periodic points of $f$ with period $q$.
Periodic points with period $1$ are also called {\em fixed points}.

Let $T$ be an operator on a topological vector space $X$.
Recall that $T$ is said to be {\em topologically transitive} (resp.\ {\em topologically mixing}) if,
for any pair $U,V$ of nonempty open sets in $X$, there exists $n \in \N_0$ (resp.\ $n_0 \in \N_0$) such that 
$T^n(U) \cap V \neq \emptyset$ (resp.\ for all $n \geq n_0$).
Moreover, $T$ is said to be {\em Devaney chaotic} if it is topologically transitive and has a dense set of periodic points.
Finally, $T$ is said to be {\em frequently hypercyclic} if there exists $x \in X$ such that, for any nonempty open set $U$ in $X$,
\[
\ldens(\{n \in \N_0 : T^n(x) \in U\}) > 0,
\]
where $\ldens(A)$ denotes the {\em lower density} of the subset $A$ of $\N_0$, that is,
\[
\ldens(A) = \liminf_{n \to \infty} \frac{\card(A \cap [0,n])}{n+1}\cdot
\] 

Let us now recall the definition of the specification property in the metric space setting.

\begin{defn}[{\bf{Specification Property}}] \cite[Definition 1]{Bartoll2016} \label{defnSP}
A continuous map $f : X \to X$ on a compact metric space $(X,d)$ is said to have the \emph{specification property (SP)} if, 
for every $\eps > 0$, there exists a positive integer $N_{\eps}$ such that, 
for any integer $s \ge 2$, any set $\{x_1,\ldots,x_s\} \subset X$ and any integers
\[
0=  a_{1} \leq  b_1 < a_2 \le b_2 < \cdots < a_{s} \leq b_s  
\]
satisfying
\[ 
a_{r} - b_{r -1} \ge N_{\eps} \quad \text{for all } 2 \leq r \leq s, 
\]
there exists a point $x \in X$ such that, for each $1 \le r \le s$ and each $a_r \le i \le b_r$, the following condition holds:
\[ 
d\bigl(f^i(x), f^i(x_r)\bigr) < \eps. 
\]
If, moreover, 
\[ 
f^n(x) = x, \quad \text{where } n = N_{\eps}+ b_s,
\]
then $f$ is said to have the \emph{periodic specification property (PSP)}.  
\end{defn}

\begin{rmk} 
In \cite{Bartoll2016}, by SP the authors mean PSP. 
\end{rmk}

\begin{defn}[{\bf{$\epsilon$-traced specification}}] \cite[Definition 3.1]{Marcin2025} 
Let $X$ be a compact metric space with metric $d$.
Given a map $f : X \to X$, an interval $[a, b] \subset \N_0$ with $0 \leq a \leq b$, and a point $x \in X$, we write 
\[
f^{[a,b]}(x) 
\]
for the finite sequence $(f^{i}(x))_{a \leq i \leq b}$ and call it the \emph{orbit segment of $x$  over $[a, b]$}. 
Moreover, given a positive integer $N$, an \emph{$N$-spaced specification} is a sequence of $s \geq 2$ orbit segments 
${(f^{[a_{r}, b_{r}]}(x_{r}))}_{1 \leq r \leq s}$ such that $a_{r} - {b_{r-1}}  \geq  N$ for all $2 \leq r \leq s$.
Finally, given $\eps >0$, a specification ${(f^{[a_{r}, b_{r}]}(x_{r}))}_{1 \leq r \leq s}$ is said to be \emph{$\eps$-traced} if 
there exists $x \in X$ such that $d(f^{i}(x),f^{i}(x_{r})) < \eps$ for all $1 \leq r \leq s$ and $a_{r} \leq i \leq b_{r}$.  
\end{defn}

\begin{rmk} 
By using the above definition, one can rewrite Definition \ref{defnSP} in the following way, as in \cite[Definition 3.2]{Marcin2025}:
Let $X$ be a compact metric space with metric $d$ and $f : X \to X$ be a continuous map. 
Then $f$ has the {\em{specification property}} if, for every $\eps >0$, there exists $N_\eps \in \N$ such that 
every $N_{\eps}$-spaced specification ${(f^{[a_{r}, b_{r}]}(x_{r}))}_{1 \leq r \leq s}$ is $\eps$-traced by a point $x \in X$
(with $f^n(x) = x$, where $n = N_{\eps}+ b_s$, in the case of the {\em periodic specification property}). 
\end{rmk}

Let us recall that every system with the specification property is topologically mixing \cite{Sigmund1974} and, for many systems, 
such as systems on the interval $[0,1]$, the specification property is equivalent to topological mixing \cite{Buzzi, Ruette}.

Now, recall that an {\em $F$-space} is a complete metrizable topological vector space.
The class of $F$-spaces includes Fr\'echet spaces (complete metrizable locally convex spaces) and, in particular, Banach spaces. 
If $X$ is an $F$-space, then $X$ always admits a compatible complete translation-invariant metric $d$ such that
\[
\|x\| = d(x,0) \ \ \ \ \ (x \in X)
\]
defines an {\em $F$-norm} on $X$, which means that the following properties hold for each $x,y \in X$ and $\lambda \in \K$:
\begin{itemize}
\item $\|x + y\| \leq \|x\| + \|y\|$;
\item $\|\lambda x\| \leq \|x\|$ if $|\lambda| \leq 1$;
\item $\lim_{\lambda \to 0} \|\lambda x\| = 0$;
\item $\|x\| = 0$ implies $x = 0$.
\end{itemize}

In the next two definitions, $X$ denotes a separable $F$-space endowed with an $F$-norm $\|\cdot\|$
and $T$ denotes an operator on $X$.

The following definition is a natural translation of Definition~\ref{defnSP} to the context of linear dynamics. 

\begin{defn}[{\bf{Operator Strong Specification Property}}] \label{defnSSP} \cite[Definition~1]{Bartoll2014} 
We say that $T$ has the \emph{operator strong specification property (OSSP)} if there exists an increasing sequence $(K_m)_{m \in \N}$ 
of $T$-invariant compact sets with $0 \in K_1$ and $\overline{\bigcup_{m \in \N} K_m} = X$ such that, 
for each $m \in \N$, the map $T|_{K_m}$ has the PSP, that is, for every $\eps > 0$, there exists a 
positive integer $N_{m,\eps}$ such that, for any integer $s \ge 2$, any set $\{x_1, \ldots, x_s\} \subset K_m$ and any integers
\[ 
0 = a_1 \le b_1 < a_2 \le b_2 < \cdots < a_s \le b_s 
\]
with $a_{r} - b_{r-1} \ge N_{m,\eps}$ for all $2 \le r \le s$, there exists a point $x \in K_m$ such that, for each $1 \le r \le s$ and
each $a_r \le i \le b_r$, the following conditions hold:
\[ 
\|T^i(x) - T^i(x_r)\| < \eps, 
\]
\[ 
T^n(x) = x, \quad \text{where } n = N_{m,\eps} + b_s. 
\] 
\end{defn}

\begin{defn}[{\bf{Operator Specification Property}}] \cite[Definition 3]{Bartoll2016}  
By removing the compactness assumption of each $K_m$ in Definition~\ref{defnSSP}, 
the operator $T$ is said to have the \emph{operator specification property (OSP)}.
\end{defn}

\begin{rmk} 
As observed in \cite[Observation 4]{Bartoll2016}, 
``it is hard to think of a map having the specification property outside of the compact setting; 
in other words, for all cases we know of operators having the OSP, the required sets  $K_m$ are always compact".  
\end{rmk}

What is the idea behind the above definitions? 
Roughly speaking, the OSP expresses a strong approximation property for the dynamics of $T$. 
It means that the space $X$ can be exhausted by an increasing sequence of $T$-invariant sets $(K_m)_{m \in \N}$ 
on which $T$ satisfies the PSP, meaning that, for each $m \in \N$ and each $\eps >0$, there exists $N_{m,\eps} \in \N$ such that 
\begin{enumerate}
\item[{}]- given finitely many points $x_1,\dots,x_s \in K_m$, $s \geq 2$,
\item [{}] - given the ``time intervals'' $[a_r,b_r]$, with large enough gaps between them, 
  that is, $a_{r} - b_{r-1} \ge N_{m,\eps}$ for all $2 \le r \le s$,
\end{enumerate}
then, one can find a single point $x \in K_m$ whose orbit $\eps$-shadows the orbit of each point $x_r$ 
on the corresponding time interval $[a_r, b_r]$, for $1\leq r \leq s$, that is, $\|T^i(x) - T^i(x_r)\| < \eps$,  for each $a_r \le i \le b_r$,
and the point $x$ is periodic, with $T^{N_{m,\eps} + b_s}(x) = x$.

This last point, of $x$ being periodic, is an important feature of all the above definitions. 
Not only one can approximate parts of the orbits of arbitrary finite collections of points with large enough gaps, 
but one can do it by using periodic orbits. 
This shows recurrence and complexity in the dynamics of the operator $T$, typically associated with strong chaotic behaviors. 
Indeed, operators with the OSP have positive topological entropy (see \cite[Page~919]{Brian2017} and \cite[Theorem~B]{LSV2026}) 
and are topologically mixing, frequently hypercyclic and Devaney chaotic (see \cite[Section 3]{Bartoll2016}).

We recall that a system $(Y, S)$ is a {\textit{factor}} of a system $(X,T)$ if there exists a continuous surjection $\Pi: X \rightarrow Y$ such that $\Pi \circ T = S \circ \Pi$. Such a map $\Pi$ is called a {\em{quasi-conjugation}}.

Some basic properties of the OSP and the OSSP are collected in \cite[Section 2]{Bartoll2016}, like, for instance, 
the facts that if an operator $T$ has the OSP (resp.\ the OSSP), then so does $T^k$ for each $k \in \N$ \cite[Proposition 8]{Bartoll2016}, 
and that the OSP (resp.\ the OSSP) is preserved by uniformly continuous quasi-conjugations \cite[Proposition 6]{Bartoll2016}.

\section{The OSP and the OSSP for weighted shifts}

The study of the specification property in linear dynamics was initiated in \cite{Bartoll2012},
where the concept of OSSP for continuous linear operators on separable Banach spaces was introduced, and 
characterizations of this concept were obtained for weighted backward shifts 
on the classical sequence spaces $c_0$ and $\ell^p$ ($1 \leq p < \infty$).
This study was extended to separable $F$-spaces and sequence $F$-spaces in \cite{Bartoll2014}.
A more extensive study of the OSP and the OSSP for operators on separable $F$-spaces was developed in \cite{Bartoll2016}, 
where several interesting properties were obtained, 
including the fact that the OSP implies topological mixing, frequent hypercyclicity and Devaney chaos.

In this section, we complement the characterizations of the OSSP for weighted backward shifts on sequence $F$-spaces 
presented in \cite{Bartoll2014}, by performing a careful analysis of the periodic points of these weighted shifts. 
In particular, we show that the OSSP is equivalent to the existence of a nontrivial fixed point, which, in turn, 
implies the existence of periodic points of every period.

Recall that a {\em sequence $F$-space} ({\em over $\Z$}) is an $F$-space $X$ which is a vector subspace of the product space $\K^\Z$
such that the inclusion map $X \to \K^\Z$ is continuous, that is, convergence in $X$ implies coordinatewise convergence.
Given a sequence $w = (w_n)_{n \in \Z}$ of nonzero scalars, it follows from the closed graph theorem that
the {\em bilateral weighted backward shift}
\[
B_w((x_n)_{n \in \Z}) = (w_{n+1}x_{n+1})_{n \in \Z}
\]
is a continuous linear operator on $X$ whenever it maps $X$ into itself.
The {\em canonical vectors} of $\K^\Z$ are the vectors $e_n$, $n \in \Z$, where the $n^\text{th}$ coordinate of $e_n$ is $1$
and the other coordinates of $e_n$ are $0$. 
The sequence $(e_n)_{n \in \Z}$ is a {\em basis} of $X$ if each $e_n$ belongs to $X$ and
\[
x = \sum_{n=-\infty}^\infty x_ne_n \ \ \text{ for all } x = (x_n)_{n \in \Z} \in X.
\]
If, in addition, for each $x = (x_n)_{n \in \Z} \in X$, the series 
\[
\sum_{n=1}^\infty x_{-n}e_{-n} \ \ \ \text{ and } \ \ \ \sum_{n=1}^\infty x_n e_n
\]
converge unconditionally, then we say that $(e_n)_{n \in \Z}$ is an {\em unconditional basis} of $X$.

\begin{thm}[Equivalences for $B_w$]\label{thmMAINBw}
For any bilateral weighted backward shift $B_w : X \to X$ on a sequence $F$-space $X$ in which the sequence
$(e_n)_{n \in \Z}$ of canonical vectors is an unconditional basis, the following conditions are equivalent: 
\begin{itemize}
\item [\rm (i)] The series
  \[
  \sum_{n = -\infty}^0 \Big(\prod_{i=n+1}^{0} w_i\Big) e_n + \sum_{n=1}^\infty \Big(\prod_{i=1}^{n} w_i\Big)^{-1} e_n
  \]
  converges in $X$;
\item [\rm (ii)] $B_w$ is Devaney chaotic;
\item [\rm (iii)] $B_w$ has the OSP;
\item [\rm (iv)] $B_w$ has the OSSP;
\item [\rm (v)] $B_w$ has a nontrivial periodic point; 
\item [\rm (vi)] $B_w$ has a nontrivial fixed point;
\item [\rm (vii)] for each $q \in \N$, $B_w$ has a nontrivial periodic point with period $q$.
\end{itemize}
Moreover, in this case, $\Per_q(B_w)$ is a dense open set in $\PER_q(B_w)$ for every $q \in \N$.
In particular, $\Per_q(B_w)$ is dense in $\Per_t(B_w)$ whenever $q,t \in \N$ and $t$ divides $q$.
\end{thm}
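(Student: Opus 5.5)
The plan is to combine the known characterizations from the literature with a direct analysis of the equation $B_w^q x = x$. From \cite{Bartoll2014} (the characterization of the OSSP for weighted backward shifts on sequence $F$-spaces with unconditional basis) I take (i) $\Leftrightarrow$ (iv). From the Grosse-Erdmann--Peris characterization of Devaney chaos for bilateral weighted shifts I take (i) $\Leftrightarrow$ (ii). The implications (iv) $\Rightarrow$ (iii) and (iii) $\Rightarrow$ (ii) are immediate: the first holds by definition, and the second is \cite[Section 3]{Bartoll2016}. Likewise (ii) $\Rightarrow$ (v), (vi) $\Rightarrow$ (v) and (vii) $\Rightarrow$ (vi) are trivial, the last by taking $q=1$. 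So it remains to prove (v) $\Rightarrow$ (i) and (i) $\Rightarrow$ (vii), together with the density statement.

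For (v) $\Rightarrow$ (i), let $x \neq 0$ with $B_w^q x = x$. Coordinatewise this reads
\[
x_n = w_{n+1}\cdots w_{n+q}\, x_{n+q} \quad \text{for all } n \in \Z.
\]
Hence $x$ is determined by its values on one block of $q$ consecutive indices, and some residue class $j+q\Z$ carries nonzero entries with
\[
x_{j+kq} = x_j \Big(\prod_{i=j+1}^{j+kq} w_i\Big)^{-1} \quad (k \geq 0),
\]
and the analogous product formula for $k<0$. Set $v_n = \prod_{i=n+1}^0 w_i$ for $n \leq 0$ and $v_n = \big(\prod_{i=1}^n w_i\big)^{-1}$ for $n \geq 1$, so that (i) says $\sum_n v_n e_n$ converges. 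Then $x_{j+kq} = c\, v_{j+kq}$ for a single nonzero constant $c$. Unconditionality of the basis lets us pass to the subseries over the class $j+q\Z$, so $\sum_k v_{j+kq} e_{j+kq}$ converges.

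To recover the other residue classes, apply $B_w^r$ for $r=0,\dots,q-1$. This maps that subseries to a multiple of $\sum_k v_{j-r+kq} e_{j-r+kq}$, because $B_w e_n = w_n e_{n-1}$ and $v_{n-1} = w_n v_n$. Each of the $q$ subseries therefore converges, and summing them gives (i). The key point is that $B_w$ is continuous and that (i), under unconditionality, is equivalent to convergence of each residue-class subseries.

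For (i) $\Rightarrow$ (vii) and the density claim, describe $\PER_q(B_w)$ explicitly. Let $u^{(j)} = \sum_k v_{j+kq} e_{j+kq}$ for $j=0,\dots,q-1$; these converge by unconditionality. Each satisfies $B_w^q u^{(j)} = u^{(j)}$, since the identity $v_{n-q} = w_{n-q+1}\cdots w_n v_n$ gives $B_w^q(v_n e_n) = v_{n-q} e_{n-q}$. By the coordinate recursion, every $x \in \PER_q(B_w)$ is uniquely $\sum_{j=0}^{q-1} c_j u^{(j)}$ with $c_j = x_j/v_j$. So $\PER_q(B_w)$ is a $q$-dimensional space, linearly isomorphic to $\K^q$ via the continuous coordinate functionals. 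Within it, $B_w$ acts as a cyclic permutation of the classes up to scalars: $B_w u^{(j)} = \lambda_j u^{(j-1 \bmod q)}$ with $\lambda_j \neq 0$, so $B_w|_{\PER_q}$ is conjugate to a weighted cyclic shift on $\K^q$.

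A point has exact period $t$ dividing $q$ iff it lies in $\PER_t$. Hence
\[
\Per_q(B_w) = \PER_q(B_w) \setminus \bigcup_{t \mid q,\, t<q} \PER_t(B_w),
\]
and each $\PER_t$ is a proper linear subspace of $\PER_q$, of dimension $t<q$. Removing finitely many proper closed subspaces from a finite-dimensional space leaves a dense open set, which proves the density statement. This set is nonempty, so $\Per_q(B_w)\neq\emptyset$ and (vii) follows. The final claim follows since $\PER_t \subset \PER_q$ when $t \mid q$: $\Per_t$ lies in $\PER_q$, where $\Per_q$ is dense.

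The main obstacle is the bookkeeping in (v) $\Rightarrow$ (i): the $v_n$ must line up exactly with the products forced by the recursion on the negative indices, and unconditionality is used twice, once to extract subseries and once to reassemble them. I would verify the index conventions on $B_w e_n$ carefully before anything else.
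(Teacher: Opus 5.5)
Your proposal is correct, and it differs from the paper's proof in two places.

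First, the paper takes (i) $\Leftrightarrow$ (ii) $\Leftrightarrow$ (v) wholesale from Grosse-Erdmann's characterization, whereas you prove (v) $\Rightarrow$ (i) directly. The recursion makes $x_n/v_n$ $q$-periodic, so a residue class on which $x$ is nonzero gives convergence of the corresponding subseries of $\sum v_ne_n$. The identity $B_w(v_ne_n)=v_{n-1}e_{n-1}$ then carries this to every other class. The reassembly step only needs that a finite sum of convergent series converges, not unconditionality.

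Second, for (i) $\Rightarrow$ (vii) and the density claim, the paper builds an explicit periodic vector from scalars $x_0\neq 0$, $x_s\neq x_0/(w_1\cdots w_s)$. It then proves density of $\Per_q(B_w)$ in $\PER_q(B_w)$ by approximating $y$ coordinatewise and bounding $\|x-y\|$, using tail estimates from the unconditional convergence in (i). You instead identify $\PER_q(B_w)$ as the $q$-dimensional span of the $u^{(j)}$, which $B_w$ permutes cyclically (in fact all $\lambda_j=1$). So $\Per_q(B_w)$ is that space minus finitely many proper subspaces $\PER_t(B_w)$ with $t\mid q$, $t<q$, and density and openness reduce to elementary facts about $\K^q$. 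The paper's $\eps/3q$ estimates amount to proving by hand that $(c_j)\mapsto\sum_j c_ju^{(j)}$ is continuous, which your approach gets for free.

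What each route buys: yours also shows that $\PER_q(B_w)$ is either $\{0\}$ or exactly $q$-dimensional, while the paper's is shorter only because it cites the direction (v) $\Rightarrow$ (i).

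One small point: for $q=1$, nonemptiness of $\Per_1(B_w)=\PER_1(B_w)$ proves nothing, since it contains $0$. The nontrivial fixed point comes from $\dim\PER_1(B_w)=1$. For $q\ge 2$ the origin is removed along with $\PER_1(B_w)$, so your nonemptiness argument suffices there.
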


\begin{proof}
The equivalences (i) $\Leftrightarrow$ (ii) $\Leftrightarrow$ (v) are contained in \cite[Theorem~9]{Grosse2000}
(see also \cite[Section~4.1]{GrosseErdmannPeris2011}).
The inclusion of (iv) among these equivalences comes from \cite[Theorem~3]{Bartoll2014}.
On the other hand, (iii) $\Rightarrow$ (ii) follows from \cite[Proposition~12]{Bartoll2016} and it is trivial that (iv) $\Rightarrow$ (iii).
It is also trivial that (vii) $\Rightarrow$ (vi) $\Rightarrow$ (v).
Therefore, to obtain all equivalences, it remains to prove that (i) $\Rightarrow$ (vii).
For this purpose, fix $q \in \N$.
Choose any nonzero scalar $x_0 \in \K$.
If $q \geq 2$, choose also scalars $x_1,\ldots,x_{q-1} \in \K$ so that 
\begin{equation}\label{FS-Eq1}
x_1 \neq \frac{x_0}{w_1}\,, \ \ x_2 \neq \frac{x_0}{w_1 w_2}\,, \ \ldots \ , x_{q-1} \neq \frac{x_0}{w_1 \cdots w_{q-1}}\cdot
\end{equation}
For each $s \in \{0,\ldots,q-1\}$, the unconditional convergence of the series in (i) implies that the subseries
\[
\sum_{n = -\infty}^{-1} \Big(\prod_{i=nq+s+1}^{0} w_i\Big) e_{nq+s} \ \ \text{ and } \ \ 
\sum_{n=1}^\infty \Big(\prod_{i=1}^{nq+s} w_i\Big)^{-1} e_{nq+s}
\]
of the first and the second series in (i), respectively, converge in $X$.
Therefore,
\begin{equation}\label{FS-Eq2}
x = \sum_{s=0}^{q-1} \Bigg[ \sum_{n = -\infty}^{-1} \Big(\prod_{i=nq+s+1}^{s} w_i\Big) x_s e_{nq+s} + x_s e_s
+ \sum_{n=1}^\infty \Big(\prod_{i=s+1}^{nq+s} w_i\Big)^{-1} x_s e_{nq+s}\Bigg]
\end{equation}
defines an element of $X$.
We have that
\begin{align*}
(B_w)^q(x) 
&= \sum_{s=0}^{q-1} \Bigg[ \sum_{n = -\infty}^{-1} \Big(\prod_{i=(n-1)q+s+1}^{s} w_i\Big) x_s e_{(n-1)q+s}
     + \Big(\prod_{i=-q+s+1}^{s} w_i\Big) x_s e_{-q+s}\\
& \ \ \ \ \ \ \ \ \ \ \ \ + x_s e_s + \sum_{n=2}^\infty \Big(\prod_{i=s+1}^{(n-1)q+s} w_i\Big)^{-1} x_s e_{(n-1)q+s}\Bigg]\\
&= \sum_{s=0}^{q-1} \Bigg[ \sum_{n = -\infty}^{-2} \Big(\prod_{i=nq+s+1}^{s} w_i\Big) x_s e_{nq+s}
     + \Big(\prod_{i=-q+s+1}^{s} w_i\Big) x_s e_{-q+s}\\
& \ \ \ \ \ \ \ \ \ \ \ \ + x_s e_s + \sum_{n=1}^\infty \Big(\prod_{i=s+1}^{nq+s} w_i\Big)^{-1} x_s e_{nq+s}\Bigg]\\
&= x.
\end{align*}
Moreover, if $q \geq 2$ then (\ref{FS-Eq1}) implies that $(B_w)^j(x) \neq x$ for each $j \in \{1,\ldots,q-1\}$.
Thus, $x$ is a nontrivial periodic point of $B_w$ with period $q$. 

To prove the density of $\Per_q(B_w)$ in $\PER_q(B_w)$, 
we assume $q \geq 2$, take $y = (y_n)_{n \in \Z} \in \PER_q(B_w)$ and fix $\eps > 0$. 
We have to find $x \in \Per_q(B_w)$ such that $\|x - y\| < \eps$.
By the unconditional convergence of the series in (i), there exists $m \geq 2$ such that
\begin{equation}\label{FS-Eq3}
\Big\|\sum_{n \in J} \Big(\prod_{i=n+1}^{0} w_i\Big) e_n\Big\| < \frac{\eps}{3q} \ \ \text{ and } \ \
\Big\|\sum_{n \in H} \Big(\prod_{i=1}^{n} w_i\Big)^{-1} e_n\Big\| < \frac{\eps}{3q}\,,
\end{equation}
whenever $J \subset \{n \in -\N : n \leq -(m-1)q\}$ and $H \subset \{n \in \N : n \geq mq\}$ are finite sets.
On the other hand, since $(B_w)^q(y) = y$, we have that
\[
y_{k+q} = \dfrac{y_k}{w_{k+1}\cdots w_{k+q}} \ \ \text{ for all } k \in \Z.
\]
In particular, for $k = s$, with $s \in \{0,\ldots,q-1\}$,
\[
y_{q+s} = \frac{y_s}{\Pi_{i=s+1}^{q+s} w_i}\,, 
\]
for $k = q + s$, with $s \in \{0,\ldots,q-1\}$,
\[
y_{2q+s} = \frac{y_{q+s}}{\Pi_{i=q+s+1}^{2q+s} w_i} =   \frac{y_s}{\Pi_{i=s+1}^{2q+s} w_i }\,, 
\]
and so on. Thus, for any $n \in \N$ and $s \in \{0,\ldots,q-1\}$,
\begin{equation}\label{FS-Eq4}
y_{nq+s} = \frac{y_s}{\Pi_{i=s+1}^{nq+s} w_i}\cdot
\end{equation}
Now, for $k = -q+s$, with  $s \in \{0,\ldots,q-1\}$, 
\[
y_s = \frac{y_{-q+s}}{\Pi_{i = -q+s+1}^s w_i},\ \ \text{that is,} \ \ y_{-q+s} = y_s \Pi_{i = -q+s+1}^{s} w_i,
\]
for $k = -2q+s$, with $s \in \{0,\ldots,q-1\}$,
\[
y_{-q+s} = \frac{y_{-2q+s}}{\Pi_{i=-2q+s+1}^{-q+s} w_i}, \ \ \text{that is,} \ \ 
y_{-2q+s} = y_{-q+s} \Pi_{i=-2q+s+1}^{-q+s} w_i = y_s \Pi_{i=-2q+s+1}^{s} w_i, 
\]
and so on. Thus, for any $n \in \N$ and $s \in \{0,\ldots,q-1\}$,
\begin{equation}\label{FS-Eq5}
y_{-nq+s} = y_s \Pi_{i=-nq+s+1}^{s} w_i. 
\end{equation}
Given any $\delta > 0$, we can choose $x_0,\ldots,x_{q-1} \in \K$ such that $x_0 \neq 0$, (\ref{FS-Eq1}) holds and
\[
|x_s - y_s| < \delta \ \ \text{ for all } s \in \{0,\ldots,q-1\}.
\]
Define $x$ by the formula given in (\ref{FS-Eq2}). As we saw in the first paragraph, $x \in \Per_q(B_w)$.
Moreover, by (\ref{FS-Eq4}) and (\ref{FS-Eq5}),
\begin{align}
x - y = \sum_{s=0}^{q-1} \Bigg[ &\sum_{n = -\infty}^{-1} \Big(\prod_{i=nq+s+1}^{s} w_i\Big) (x_s - y_s) e_{nq+s} \notag\\
  & \ + (x_s - y_s) e_s + \sum_{n=1}^\infty \Big(\prod_{i=s+1}^{nq+s} w_i\Big)^{-1} (x_s - y_s) e_{nq+s}\Bigg].\label{FS-Eq6}
\end{align}
By choosing $\delta > 0$ small enough, we have that
\[
\Big|(x_s - y_s) \Big(\prod_{i=1}^s w_i\Big)\Big| < 1 \ \ \text{ for all } s \in \{0,\ldots,q-1\}.
\]
Hence, it follows from the first inequality in (\ref{FS-Eq3}) that
\begin{equation}\label{FS-Eq7}
\Big\|\sum_{n = -\infty}^{-m} \Big(\prod_{i=nq+s+1}^{s} w_i\Big) (x_s - y_s) e_{nq+s}\Big\| 
  \leq \Big\|\sum_{n = -\infty}^{-m} \Big(\prod_{i=nq+s+1}^{0} w_i\Big) e_{nq+s}\Big\| \leq \frac{\eps}{3q}\cdot
\end{equation}
Similarly, it follows from the second inequality in (\ref{FS-Eq3}) that
\begin{equation}\label{FS-Eq8}
\Big\|\sum_{n=m}^\infty \Big(\prod_{i=s+1}^{nq+s} w_i\Big)^{-1} (x_s - y_s) e_{nq+s}\Big\|
    \leq \Big\|\sum_{n=m}^\infty \Big(\prod_{i=1}^{nq+s} w_i\Big)^{-1} e_{nq+s}\Big\| \leq \frac{\eps}{3q}\cdot
\end{equation}
Finally, by choosing $\delta$ small enough, we can also guarantee that
\begin{align}
\Big\| &\sum_{n = -m+1}^{-1} \Big(\prod_{i=nq+s+1}^{s} w_i\Big) (x_s - y_s) e_{nq+s} \notag\\
  & \ + (x_s - y_s) e_s + \sum_{n=1}^{m-1} \Big(\prod_{i=s+1}^{nq+s} w_i\Big)^{-1} (x_s - y_s) e_{nq+s}\Big\| < \frac{\eps}{3q}\,,\label{FS-Eq9}
\end{align}
for each $s \in \{0,\ldots,q-1\}$.
In view of (\ref{FS-Eq6})--(\ref{FS-Eq9}), we conclude that $\|x - y\| < \eps$.

Finally, it is clear that $\Per_q(B_w)$ is open in $\PER_q(B_w)$ and that the last assertion of the theorem holds.
\end{proof}

As an application, let us consider the case of the classical Banach sequence spaces $\ell^p(\Z)$ for $1 \leq p < \infty$.
In this case, the concept of frequent hypercyclicity can be included among the equivalences, as already observed in \cite{Bartoll2014}.
Indeed, this follows from the fact that the concepts of frequent hypercyclicity and Devaney chaos coincide 
for weighted shifts on $\ell^p(\Z)$ (see \cite{BayartRuzsa}).

\begin{cor}[Equivalences for $B_w$ on $\ell^p(\Z)$]\label{CorBwlp}
For any bilateral weighted backward shift $B_w$ on $\ell^p(\Z)$ ($1 \leq p < \infty$), the following conditions are equivalent: 
\begin{itemize}
\item [\rm (i)] $\displaystyle \sum_{n=1}^{\infty} \frac{1}{\prod_{i=1}^{n} |w_i|^p} < \infty
  \ \text{ and } \ \sum_{n=1}^{\infty} \prod_{i=-n+1}^{0} |w_i|^p < \infty$;
\item [\rm (ii)] $B_w$ is Devaney chaotic;
\item [\rm (iii)] $B_w$ is frequently hypercyclic;
\item [\rm (iv)] $B_w$ has the OSP;
\item [\rm (v)] $B_w$ has the OSSP;
\item [\rm (vi)] $B_w$ has a nontrivial periodic point; 
\item [\rm (vii)] $B_w$ has a nontrivial fixed point;
\item [\rm (viii)] for each $q \in \N$, $B_w$ has a nontrivial periodic point with period $q$.
\end{itemize}
\end{cor}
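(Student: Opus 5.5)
The plan is to read Corollary~\ref{CorBwlp} off Theorem~\ref{thmMAINBw} with $X = \ell^p(\Z)$. Two things must be added: the abstract condition (i) of the theorem has to be rewritten as the concrete summability condition (i) here, and frequent hypercyclicity has to be joined to the list.

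\emph{Applicability of Theorem~\ref{thmMAINBw}.} First I check the hypotheses for $X=\ell^p(\Z)$, $1 \le p < \infty$. It is a Banach space, so in particular an $F$-space. The coordinate functionals are continuous, since $|x_n| \le \|x\|_p$, so $\ell^p(\Z)$ is a sequence $F$-space. The canonical vectors form an unconditional basis: for any $x \in \ell^p(\Z)$, the tail $\sum_{n\in F}|x_n|^p$ over finite sets $F$ outside a large window is small, which gives unconditional convergence of both half-series. I also implicitly assume $B_w$ maps $\ell^p(\Z)$ into itself, i.e.\ $w$ is bounded, as the statement tacitly does.

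\emph{Rewriting condition (i).} Because the basis is unconditional with $\|\sum_n c_ne_n\|_p^p=\sum_n|c_n|^p$, the series in (i) of Theorem~\ref{thmMAINBw} converges in $\ell^p(\Z)$ if and only if
\[
\sum_{n\le 0}\prod_{i=n+1}^0|w_i|^p+\sum_{n\ge1}\prod_{i=1}^n|w_i|^{-p}<\infty .
\]
The $n=0$ term is the empty product $1$. Reindexing $n\mapsto -n$ turns the negative part into $\sum_{n\ge1}\prod_{i=-n+1}^{0}|w_i|^p$. This is exactly condition (i) of the corollary.

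\emph{Collecting the equivalences.} With (i) translated, Theorem~\ref{thmMAINBw} at once gives the equivalence of (i), (ii), (iv), (v), (vi), (vii), (viii) of the corollary. These correspond to (i), (ii), (iii), (iv), (v), (vi), (vii) of the theorem.

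\emph{Adding frequent hypercyclicity.} For (iii), I invoke the result of Bayart and Ruzsa \cite{BayartRuzsa}: for bilateral weighted shifts on $\ell^p(\Z)$, frequent hypercyclicity is equivalent to condition (i), and hence to Devaney chaos. One direction is also available independently. Condition (iv) gives (iii) because the OSP implies frequent hypercyclicity \cite[Section~3]{Bartoll2016}. The converse (iii)$\Rightarrow$(i) is the genuinely deep step. It is also the only place where the specific $\ell^p$ structure matters beyond the theorem, and it fails for $c_0(\Z)$. I would not reprove it but cite \cite{BayartRuzsa}.
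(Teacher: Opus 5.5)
Your proposal is correct and follows essentially the same route as the paper. You specialize Theorem~\ref{thmMAINBw} to $X=\ell^p(\Z)$, where convergence of the series in its condition (i) is exactly the stated $p$-summability, and you add frequent hypercyclicity through the Bayart--Ruzsa result \cite{BayartRuzsa} that frequent hypercyclicity and Devaney chaos coincide for weighted shifts on $\ell^p(\Z)$.
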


\smallskip
In the case of the space $c_0(\Z)$, we have the following result.

\begin{cor}[Equivalences for $B_w$ on $c_0(\Z)$]\label{CorBwc0}
For any bilateral weighted backward shift $B_w$ on $c_0(\Z)$, the following conditions are equivalent: 
\begin{itemize}
\item [\rm (i)] $\displaystyle \lim_{n \to \infty} \frac{1}{\prod_{i=1}^{n} w_i} = \lim_{n \to \infty} \prod_{i=-n+1}^{0} w_i = 0$;
\item [\rm (ii)] $B_w$ is Devaney chaotic;
\item [\rm (iii)] $B_w$ has the OSP;
\item [\rm (iv)] $B_w$ has the OSSP;
\item [\rm (v)] $B_w$ has a nontrivial periodic point; 
\item [\rm (vi)] $B_w$ has a nontrivial fixed point;
\item [\rm (vii)] for each $q \in \N$, $B_w$ has a nontrivial periodic point with period $q$.
\end{itemize}
\end{cor}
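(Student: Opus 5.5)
The plan is to derive Corollary~\ref{CorBwc0} directly from Theorem~\ref{thmMAINBw}, applied with $X = c_0(\Z)$. First we check that $c_0(\Z)$ satisfies the hypotheses of that theorem. It is a Banach space and hence an $F$-space. It embeds continuously in $\K^\Z$, since $|x_n| \le \|x\|_\infty$. Moreover, $(e_n)_{n\in\Z}$ is an unconditional basis: for $x \in c_0(\Z)$ and any finite set $F$ avoiding $[-N,N]$, we have $\|\sum_{n\in F} x_n e_n\|_\infty \le \sup_{|n|>N}|x_n| \to 0$. This Cauchy criterion gives unconditional convergence of both one-sided series and convergence of $\sum_n x_ne_n$ to $x$. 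We also assume, as is implicit in the statement, that $B_w$ maps $c_0(\Z)$ into itself, so that it is an operator there.

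With these hypotheses in place, conditions (ii)--(vii) of the corollary coincide verbatim with conditions (ii)--(vii) of Theorem~\ref{thmMAINBw}. It therefore suffices to show that condition (i) of the theorem, in the case $X=c_0(\Z)$, is equivalent to condition (i) of the corollary.

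The series in Theorem~\ref{thmMAINBw}(i) is the formal sequence $z=(z_n)_{n\in\Z}$ given by
\[
z_n = \prod_{i=n+1}^{0} w_i \quad (n \le 0), \qquad z_n = \Big(\prod_{i=1}^{n} w_i\Big)^{-1} \quad (n\ge 1).
\]
If the series converges in $c_0(\Z)$, its limit must equal $z$ coordinatewise, by continuity of the inclusion into $\K^\Z$. Hence $z \in c_0(\Z)$, so $z_n \to 0$ as $n\to\pm\infty$. Reindexing with $n=-k+1$, the condition at $-\infty$ reads $\prod_{i=-k+1}^{0} w_i \to 0$, which is exactly (i) of the corollary. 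Conversely, if (i) of the corollary holds, then $z\in c_0(\Z)$. Since the canonical vectors form a basis of $c_0(\Z)$, the series $\sum z_n e_n$ then converges to $z$ in $c_0(\Z)$.

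The limits in (i) are understood in the sense of absolute values tending to zero, which is the same as the scalars tending to zero. There is no real obstacle in this argument. The only point requiring care is the identification of convergence of the series with membership of the coefficient sequence in $c_0(\Z)$, which rests on the basis property just verified.
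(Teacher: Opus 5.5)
Your proposal is correct and follows the paper's intended route: the corollary is stated as a direct specialization of Theorem~\ref{thmMAINBw} to $X = c_0(\Z)$, where convergence of the series in (i) amounts exactly to the coefficients tending to $0$ at $\pm\infty$. One trivial slip: the substitution that turns $\prod_{i=n+1}^{0} w_i$ into $\prod_{i=-k+1}^{0} w_i$ is $n=-k$, not $n=-k+1$; this index shift does not affect the limit.
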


\begin{rmk}\label{Remarkc0}
Unlike the case of weighted shifts on $\ell^p$ spaces, 
we cannot include the item ``$B_w$ is frequently hypercyclic'' in the previous corollary.
In fact, Bayart and Ruzsa constructed in \cite[Section~6]{BayartRuzsa} 
a frequent hypercyclic invertible weighted shift $B_w$ on $c_0(\Z)$ such that
\[
\prod_{i=-n+1}^{0} w_i = 1 \ \ \text{ for all } n \in A,
\]
where $A$ is a certain subset of $\N$ with positive lower density.
In particular, condition (i) fails, although $B_w$ is frequent hypercyclic.
However, the equivalent conditions in Corollary~\ref{CorBwc0} imply that $B_w$ is frequent hypercyclic.
This follows from the fact that the OSP implies frequent hypercyclicity \cite[Theorem~13]{Bartoll2016}.
\end{rmk}

\begin{rmk}
Of course, in Corollaries \ref{CorBwlp} and \ref{CorBwc0}, it is also true that 
$\Per_q(B_w)$ is a dense open set in $\PER_q(B_w)$ and that $\Per_q(B_w)$ is dense in $\Per_t(B_w)$ whenever $t$ divides $q$.
\end{rmk}

Recall that a {\em sequence $F$-space} ({\em over $\N$}) is an $F$-space $X$ which is a vector subspace of the product space $\K^\N$
such that convergence in $X$ implies coordinatewise convergence.
As before, if $w = (w_n)_{n \in \N}$ is a sequence of nonzero scalars, 
it follows from the closed graph theorem that the {\em unilateral weighted backward shift}
\[
B_w(x_1,x_2,x_3,\ldots) = (w_2 x_2,w_3 x_3,\ldots)
\]
is a continuous linear operator on $X$ whenever it maps $X$ into itself.
By abuse of language, we also denote the {\em canonical vectors} of $\K^\N$ by $e_n = (\delta_{n,k})_{k \in \N}$, $n \in \N$.
The sequence $(e_n)_{n \in \N}$ is a {\em basis} of $X$ if each $e_n$ belongs to $X$ and
\[
x = \sum_{n=1}^\infty x_n e_n \ \ \text{ for all } x = (x_n)_{n \in \N} \in X.
\]
If, in addition, the above series always converges unconditionally, then we say that $(e_n)_{n \in \N}$ is an {\em unconditional basis} of $X$.

We have the following version of Theorem~\ref{thmMAINBw} for unilateral weighted backward shifts on sequence $F$-spaces,
which complements the equivalences presented in \cite[Theorem~2]{Bartoll2014}.

\begin{thm}\label{thmMAINBwUnilateral}
For any unilateral weighted backward shift $B_w : X \to X$ on a sequence $F$-space $X$ in which the sequence
$(e_n)_{n \in \N}$ of canonical vectors is an unconditional basis, the following conditions are equivalent: 
\begin{itemize}
\item [\rm (i)] The series
  \[
  \sum_{n=1}^\infty \Big(\prod_{i=1}^{n} w_i\Big)^{-1} e_n
  \]
  converges in $X$;
\item [\rm (ii)] $B_w$ is Devaney chaotic;
\item [\rm (iii)] $B_w$ has the OSP;
\item [\rm (iv)] $B_w$ has the OSSP;
\item [\rm (v)] $B_w$ has a nontrivial periodic point; 
\item [\rm (vi)] $B_w$ has a nontrivial fixed point;
\item [\rm (vii)] for each $q \in \N$, $B_w$ has a nontrivial periodic point with period $q$.
\end{itemize}
Moreover, in this case, $\Per_q(B_w)$ is a dense open set in $\PER_q(B_w)$ for every $q \in \N$.
In particular, $\Per_q(B_w)$ is dense in $\Per_t(B_w)$ whenever $q,t \in \N$ and $t$ divides $q$.
\end{thm}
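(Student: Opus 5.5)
The plan mirrors the proof of Theorem~\ref{thmMAINBw}. The equivalences (i) $\Leftrightarrow$ (ii) $\Leftrightarrow$ (v) for unilateral shifts are classical: see \cite{Grosse2000} and \cite[Section~4.1]{GrosseErdmannPeris2011}. The implication (i) $\Rightarrow$ (iv) follows from \cite[Theorem~2]{Bartoll2014}. The implication (iv) $\Rightarrow$ (iii) is trivial, and (iii) $\Rightarrow$ (ii) follows from \cite[Proposition~12]{Bartoll2016}. Finally, (vii) $\Rightarrow$ (vi) $\Rightarrow$ (v) is trivial. So the only new implication is (i) $\Rightarrow$ (vii), together with the density statement.

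For (i) $\Rightarrow$ (vii), I fix $q \in \N$ and choose scalars $x_1,\ldots,x_q$ with $x_1 \neq 0$. In the unilateral setting the first coordinate is $1$, so it is natural to parametrize by the block $1,\ldots,q$. The periodicity condition $(B_w)^q x = x$ reads
\[
x_{k+q} = \frac{x_k}{w_{k+1}\cdots w_{k+q}},
\]
which forces
\[
x_{nq+s} = x_s\Big(\prod_{i=s+1}^{nq+s} w_i\Big)^{-1}, \quad s \in \{1,\ldots,q\},\ n \in \N_0 .
\]
There is no constraint in the negative direction, so no backward half is needed. Writing
\[
\Big(\prod_{i=s+1}^{nq+s} w_i\Big)^{-1} = \Big(\prod_{i=1}^{s} w_i\Big)\Big(\prod_{i=1}^{nq+s} w_i\Big)^{-1}
\]
(with the convention that the empty product is $1$), each residue-class piece is a constant multiple of a subseries of the series in (i). Such a subseries converges by unconditionality, so $x \in X$.

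It remains to ensure that $x$ has exact period $q$. The fixed-point structure is the key here. Since $(B_w^j x)_k = w_{k+1}\cdots w_{k+j}\, x_{k+j}$, the condition $B_w^j x = x$ at coordinate $1$ gives $x_{1+j} = x_1/(w_2\cdots w_{1+j})$. So I impose
\[
x_{1+j} \neq \frac{x_1}{w_2 \cdots w_{1+j}}, \quad j=1,\ldots,q-1,
\]
and then $B_w^j x \neq x$ for $1 \le j < q$, while $x \neq 0$ because $x_1 \neq 0$.

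For density of $\Per_q(B_w)$ in $\PER_q(B_w)$, I take $y \in \PER_q(B_w)$. By the relation above, $y$ is determined by $y_1,\ldots,y_q$ through the same formula. I perturb these to $x_1,\ldots,x_q$ satisfying the $q-1$ inequalities; this is possible since each excludes only one value, and if $y_1 = 0$ I also make $x_1$ small and nonzero. Then $x - y$ is given by the same formula with coefficients $x_s - y_s$. I split it into tails with $n \ge m$ and a finite head. The tails are controlled by unconditionality of the series in (i): choose $m$ so that all finite subsums beyond index $mq$ have $F$-norm less than $\eps/(2q)$, and use $\|\lambda z\| \le \|z\|$ for $|\lambda| \le 1$ once $|x_s - y_s|\,|\prod_{i=1}^s w_i| \le 1$. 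The finite head is small by continuity of scalar multiplication. Openness of $\Per_q$ in $\PER_q$ holds because $\PER_q \setminus \Per_q$ is the union of the closed sets $\PER_t$ with $t$ a proper divisor of $q$. The final claim then follows since $\Per_t \subset \PER_q$ when $t \mid q$.

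The main obstacle is mild. One must check carefully that \cite[Theorem~2]{Bartoll2014} and \cite{Grosse2000} are stated in the unilateral setting with exactly condition (i). If \cite{Grosse2000} only gives the bilateral case, I would instead derive (v) $\Rightarrow$ (i) directly. A nonzero periodic $x$ with period $q$ has some $x_s \neq 0$, and the formula above then shows that the subseries along $s + q\N_0$ converges. Every $n \ge s$ has the form $n = k + j$ with $k \equiv s \pmod q$, $k \ge s$, and $0 \le j < q$. Using $(B_w)^j$ applied to this convergent subseries, one gets convergence along each residue class, and hence convergence of the full series by unconditionality.
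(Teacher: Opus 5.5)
Your proposal is correct and is essentially the paper's argument: the paper states that the proof is the bilateral one from Theorem~\ref{thmMAINBw} adapted to the unilateral setting. That means citing the known equivalences (\cite{Grosse2000}, \cite[Theorem~2]{Bartoll2014}, \cite[Proposition~12]{Bartoll2016}) and proving (i) $\Rightarrow$ (vii) and the density of $\Per_q(B_w)$ in $\PER_q(B_w)$ via the residue-class construction, the exact-period inequalities and tail control from unconditional convergence, exactly as you do. In your optional fallback for (v) $\Rightarrow$ (i), the decomposition should read $n = k - j$, since $(B_w)^j$ lowers indices, but this does not affect the argument.
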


The proof of the above theorem is similar to, but slightly simpler than, that of Theorem~\ref{thmMAINBw}.
Results analogous to Corollaries~\ref{CorBwlp} and~\ref{CorBwc0} are valid for unilateral weighted backward shifts
on $\ell^p(\N)$ ($1 \leq p < \infty$) and $c_0(\N)$, respectively.
However, we leave all the details up to the reader.

\section{The OSP and the OSSP versus the shadowing property}

Given a metric space $X$ with metric $d$, recall that a homeomorphism $f : X \to X$ is said to have the {\em shadowing property} if,
for every $\eps > 0$, there exists $\delta > 0$ such that for any bilateral sequence $(x_j)_{j \in \Z}$ in $X$ satisfying
\[
d(f(x_j),x_{j+1}) \leq \delta \ \text{ for all } j \in \Z
\]
(called a {\em $\delta$-pseudotrajectory} of $f$), there exists a point $x \in X$ whose true trajectory {\em $\eps$-shadows} 
$(x_j)_{j \in \Z}$, in the sense that
\[
d(x_j,f^j(x)) < \eps \ \text{ for all } j \in \Z.
\]
This is one of the key concepts in the modern theory of dynamical systems 
and has been extensively investigated by various authors in different contexts for over five decades.
Recent articles on the shadowing property in the setting of linear dynamics include \cite{ABM21,BCDFP25,BCDMP18,BeMe21,BePe24,Pit}.

It is natural to ask if there is any relationship between the specification property and the shadowing property in the context of linear dynamics.
The goal of this section is to answer this question through the following result.

\begin{thm}\label{OSP-Shad}
Let $X = c_0(\Z)$ or $X = \ell^p(\Z)$ for some $1 \leq p < \infty$.
\begin{itemize}
\item [\rm (a)] There exist invertible weighted shifts $B_w$ on $X$ that have the operator strong specification property, 
  but do not have the shadowing property. 
\item [\rm (b)] There exist invertible weighted shifts $B_w$ on $X$ that have the shadowing property, 
  but do not have the operator specification property. 
\end{itemize}
\end{thm}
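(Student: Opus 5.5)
We construct explicit weights, using Corollaries~\ref{CorBwlp} and~\ref{CorBwc0} to decide the OSP/OSSP and the known characterization of shadowing for invertible bilateral weighted shifts on $c_0(\Z)$ and $\ell^p(\Z)$ (Bernardes--Cirilo--Darji--Messaoudi--Pujals). That characterization says $B_w$ (with $w$ and $1/w$ bounded) has the shadowing property if and only if one of the following holds:
\begin{itemize}
\item[(A)] $\lim_{n\to\infty}\sup_{k\in\Z}|w_k\cdots w_{k+n}|^{1/n}<1$;
\item[(B)] $\lim_{n\to\infty}\inf_{k\in\Z}|w_k\cdots w_{k+n}|^{1/n}>1$;
\item[(C)] $\lim_{n\to\infty}\sup_{k\in\N}|w_{-k}\cdots w_{-k-n}|^{1/n}<1$ and $\lim_{n\to\infty}\inf_{k\in\N}|w_k\cdots w_{k+n}|^{1/n}>1$.
\end{itemize}
In words, the shift is uniformly contracting, or uniformly expanding, or it has the hyperbolic ``contracting on the left, expanding on the right'' pattern. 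We take all weights positive.

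For part (b) we use the constant weight $w_n=2$ for all $n\in\Z$, i.e.\ $B_w=2B$. It is invertible and satisfies (B), so it has the shadowing property. On the other hand, $\prod_{i=-n+1}^0 w_i=2^n$ does not tend to $0$. Hence condition (i) of Corollary~\ref{CorBwc0} fails on $c_0(\Z)$, and condition (i) of Corollary~\ref{CorBwlp} fails on $\ell^p(\Z)$. So $B_w$ does not have the OSP. (It is not even Devaney chaotic, since it is not hypercyclic.)

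For part (a) the weights must satisfy condition (i) of the Corollaries. The products must therefore decay on the left and grow on the right, but the growth should be only subexponential so that none of (A), (B), (C) holds. The plan is to take $w_0=1$ and
\[
w_n=\Big(\frac{n+1}{n}\Big)^{2/p}\ (n\ge1),\qquad w_{-n}=\Big(\frac{n}{n+1}\Big)^{2/p}\ (n\ge1),
\]
where $2/p$ is replaced by $1$ in the $c_0$ case. Then $\prod_{i=1}^n w_i=(n+1)^{2/p}$, so the sums in (i) behave like $\sum (n+1)^{-2}<\infty$ (and like $1/(n+1)\to 0$ for $c_0$). On the left, $\prod_{i=-n+1}^0 w_i=n^{-2/p}$, and the same bounds hold. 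The weights satisfy $1\le w_n\le 2^{2/p}$ and $w_n\to1$, so $B_w$ is invertible. By Corollaries~\ref{CorBwlp} and~\ref{CorBwc0}, $B_w$ has the OSSP.

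To rule out shadowing, note that for fixed $n$ the block product $\prod_{j=k}^{k+n}w_j$ equals $\big((k+n+1)/k\big)^{2/p}$ for $k\ge1$. This tends to $1$ as $k\to\infty$, and the same happens as $k\to-\infty$. Therefore $\sup_k$ of the $n$-th root is at least $1$ and $\inf_k$ of it is at most $1$, so (A) and (B) fail. In (C) the expanding condition on the positive side also fails. Hence $B_w$ has no shadowing property.

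The main obstacle is making sure we use the precise form of the shadowing characterization. The key point is that the hyperbolic ``split'' case (C) really requires \emph{uniform exponential} rates on each half-line, not merely convergence of the series. Once that is confirmed, the argument is just the two computations above.
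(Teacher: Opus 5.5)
Your proof is correct. Part (a) uses the same two ingredients as the paper: Corollaries~\ref{CorBwlp} and~\ref{CorBwc0} for the OSSP, and the shadowing characterization for weighted shifts. The paper takes that characterization from Bernardes--Messaoudi, \cite[Theorem~18]{BeMe21}, not from the five-author paper you credit.

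The difference in (a) is the choice of weight. The paper concatenates ever longer blocks of $1$'s, separated by $a$ on the left and $a^{-1}$ on the right. The products then decay exponentially along sparse positions, while the long blocks of $1$'s destroy any uniform exponential rate. You instead take $w_n\to1$ with telescoping products $(n+1)^{2/p}$ and $n^{-2/p}$, which are summable but subexponential. The failure of (A)--(C) is then read off from the explicit block products $((k+n+1)/k)^{2/p}\to1$. Two cosmetic points:
\begin{itemize}
\item For negative indices one has $2^{-2/p}\le w_n<1$, not $w_n\ge1$; invertibility only needs $w$ and $1/w$ bounded, so nothing breaks.
\item A fixed exponent $s>1$ in place of $2/p$ would give a single weight serving $c_0(\Z)$ and all $\ell^p(\Z)$ at once, as the paper's does.
\end{itemize}

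Part (b) is where the routes genuinely differ. The paper argues abstractly: every invertible hyperbolic operator has shadowing (Ombach) but is not even Li--Yorke chaotic, whereas the OSP forces Devaney chaos. Hence every invertible hyperbolic weighted shift is an example. You check the single operator $2B$ directly: shadowing comes from condition (B), and the absence of the OSP from the failure of condition (i) in the Corollaries. Your version stays entirely within tools the paper already uses. The paper's version yields a whole class of examples and explains the phenomenon conceptually: shadowing can coexist with the complete absence of chaos.
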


\begin{proof}
(a): Fix a real number $0 < a < 1$. The weight sequence $w = (w_n)_{n \in \Z}$ will be defined 
by concatenating certain blocks $B_n$ with the numbers $a$ and $a^{-1}$ in a suitable manner. 
More specifically, for each $n \in \N$, let $B_n$ denote the block $1,1,\ldots,1$ consisting of $n$ numbers $1$, and define
\[
w = (\ldots,B_4,a,B_3,a,B_2,a,B_1,a,a^{-1},B_1,a^{-1},B_2,a^{-1},B_3,a^{-1},B_4,\ldots),
\]
where the first $a^{-1}$ appears at position $1$. Since
\[
\sum_{n=1}^{\infty} \frac{1}{\prod_{i=1}^{n} |w_i|} = \sum_{n=1}^{\infty} \prod_{i=-n+1}^{0} |w_i| = \sum_{n=1}^\infty (n+1) a^{n},
\]
which is a convergent series (by the root test), it follows from Corollaries~\ref{CorBwlp} and~\ref{CorBwc0} that $B_w$ has the OSSP.
In order to prove that $B_w$ does not have the shadowing property, 
we shall use the characterization of the shadowing property for weighted shifts obtained in \cite[Theorem~18]{BeMe21}, namely:
$B_w$ has the shadowing property if and only if one of the following conditions holds:
\begin{itemize}
\item $\displaystyle \lim_{n \to \infty} \sup_{k \in \Z} |w_k \cdots w_{k+n}|^\frac{1}{n} < 1$;
\item $\displaystyle \lim_{n \to \infty} \inf_{k \in \Z} |w_k \cdots w_{k+n}|^\frac{1}{n} > 1$;
\item $\displaystyle \lim_{n \to \infty} \sup_{k \in \N} |w_{-k-n} \cdots w_{-k}|^\frac{1}{n} < 1$
          and
          $\displaystyle \lim_{n \to \infty} \inf_{k \in \N} |w_k \cdots w_{k+n}|^\frac{1}{n} > 1$.
\end{itemize}
Since the weight sequence $w$ contains arbitrarily large blocks of 1's on both sides
(i.e., with negative indices and with positive indices), we see that none of the above conditions can be satisfied, 
proving that $B_w$ does not have the shadowing property.

\smallskip\noindent
(b): It is known that if an invertible operator $T$ on a Banach space $Y$ is hyperbolic (that is, its spectrum does not intersect the unit circle),
then $T$ has the shadowing property \cite[Theorem~1]{Omb94}, but it is not even Li-Yorke chaotic \cite[Theorem~C]{BCDMP18}, 
which is the weakest of the usual notions of chaos in linear dynamics. 
Since operators with the OSP are Devaney chaotic \cite[Proposition~12]{Bartoll2016}, we conclude that $T$ cannot have the OSP.
Thus, any invertible hyperbolic weighted shift $B_w$ on $X$ does the job
(a characterization of the hyperbolic $B_w$'s can be found in \cite[Remark~35(b)]{BCDMP18}).
\end{proof}

\section{The OSP and the OSSP for composition operators}\label{s-OSP,Tf}

Our goal in this section is to investigate the two properties, OSP and OSSP, 
in the context of composition operators defined on $L^p$ spaces.

Given a measure space $(X,\B,\mu)$, recall that a transformation $f : X \to X$ is said to be
\begin{itemize}
\item {\em bimeasurable} if $f(B) \in \B$ and $f^{-1}(B) \in \B$ for all $B \in \B$;
\item {\em nonsingular} if $\mu(f^{-1}(B)) = 0$ if and only if $\mu(B) = 0$.
\end{itemize}
Recall also that a {\em measurable system} is a quadruple $(X,\B,\mu,f)$, where
\begin{itemize}
\item $(X,\B,\mu)$ is a $\sigma$-finite measure space with $\mu(X) > 0$;
\item $f : X \to X$ is an injective bimeasurable nonsingular transformation;
\item there is a constant $c > 0$ such that
\begin{equation}\label{condition}
\mu(f^{-1}(B)) \leq c\, \mu(B) \ \textrm{ for every } B \in \B. \tag{$\star$}
\end{equation}
\end{itemize}
If $f$ is bijective and $f^{-1}$ also satisfies ($\star$), we say that the measurable system is {\em invertible}.

If $(X,\B,\mu,f)$ is a measurable system and $p \in [1,\infty)$, 
it is well known that ($\star$) guarantees that the {\em composition operator} 
\[
T_f : \varphi \in L^p(X,\B,\mu) \mapsto \varphi \circ f \in L^p(X,\B,\mu)
\]
is a well-defined bounded linear operator.
Moreover, if the measurable system is invertible, then the composition operator $T_f$ is invertible and $T_f^{-1} = T_{f^{-1}}$. 

Let $(X,\B,\mu,f)$ be a measurable system.
If there exists $W \in \B$, with $0 < \mu(W) < \infty$, such that 
\[
X =\dot\bigcup_{k \in \Z}\, f^k (W) \ \ \ \ \ (\text{disjoint union}),
\]
then we say that $(X,\B,\mu,f)$ is a {\em dissipative measurable system} (and $T_f$ is a {\em dissipative composition operator})
{\em generated by $W$}, and $W$ is called a {\em wandering set}.
If, in addition, there exists $K > 0$ such that
\[
 \dfrac{1}{K}\, \mu(f^k(W))\mu(B) \leq \mu(f^k (B))\mu (W) \leq K \mu(f^k(W))\mu(B)
\]
for all $k \in \Z$ and $B \in \B(W)$, where $\B(W) = \{B \cap W : B \in \B\}$, 
then the dissipative measurable system (and the dissipative composition operator) is said to have {\em bounded distortion}.
It is known that for such a composition operator $T_f$, the concepts of Devaney chaos and frequent hypercyclicity coincide \cite[Theorem~3.7]{Darjipires}. 
Therefore, it is natural to ask whether a more general version of Corollary~\ref{CorBwlp} holds 
for dissipative composition operators $T_f$ of bounded distortion. 

In the sequel, we fix a real number $p \in [1,\infty)$ and an invertible dissipative measurable system $(X,\B,\mu,f)$, 
unless otherwise specified.
Moreover, we will usually denote $L^p(X,\B,\mu)$ simply by $L^p(X)$.

Each weighted backward shift $B_w$ on $\ell^p(\Z)$, with weights in $(0,\infty)$ (we can always reduce to positive weights, 
see, for instance, \cite[Proposition 6.2]{Conway} and \cite[Proposition 2.21]{DAnielloMaiuriello2023}), 
can be seen as a specific composition operator $T_g$ on $L^p(\Z,{\mathcal P}(\Z),\nu)$, 
where ${\mathcal P}(\Z)$ is the power set of $\Z$, $g: \Z \to \Z$ is the $+1$-map, and  
\begin{align*}
\nu(\{k\})& = \left\{
\begin{array}{cl}
 \dfrac{1}{(w_1\cdots w_k)^{p}} & \text{ if } k > 0 \\
 1 &  \text{ if } k=0 \\
 \left( w_{k+1}\cdots w_0\right)^{p} &  \text{ if } k < 0
 \end{array}\right.
 \end{align*}
(see \cite[Proposition~2.1]{DAnielloDarjiMaiuriello2}).

It is known from \cite[Lemma~4.2.3]{DAnielloDarjiMaiuriello} that, given a dissipative composition operator $T_f$ 
of bounded distortion on $L^p(X)$, the bilateral weighted backward shift $B_w$ on $\ell^p(\Z)$ with weights 
\[
w_{k} = \left(\frac{\mu(f^{k-1}(W))}{\mu(f^{k}(W))}\right)^{\frac{1}{p}} \ \ \ \ \ (k \in \Z)
\]
is a factor of $T_f$ by means of a {\em linear} quasi-conjugation $\Gamma : L^p(X) \to \ell^p(\Z)$. 
Clearly, the weighted shift $B_w$ associated with the composition operator $T_f$ is invertible. 

The first approach to the study of the OSP and the OSSP for $T_f$ is the analysis of its shift-like behavior 
(see \cite{DAnielloDarjiMaiuriello2} for the background), as the following result shows.

\begin{thm}[Shift-like behavior] \label{thmshiftlike}
Let $T_f$ be a dissipative composition operator of bounded distortion on $L^p(X)$ 
and $B_w$ the associated bilateral weighted backward shift on $\ell^p(\Z)$. Then, 
 \begin{itemize}
     \item[\rm (a)] $T_f$ has the OSP if and only if $B_w$ has the OSP;
     \item[\rm (b)] $T_f$ has the OSSP if and only if $B_w$ has the OSSP.
 \end{itemize}
\end{thm}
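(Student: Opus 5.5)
The plan is to prove the two implications of (a) and (b) separately. The direction from $T_f$ to $B_w$ is a factor argument. The direction from $B_w$ to $T_f$ is reduced to Corollary~\ref{CorBwlp} and a vector-valued version of the weighted shift.

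\emph{From $T_f$ to $B_w$.} By \cite[Lemma~4.2.3]{DAnielloDarjiMaiuriello}, $B_w$ is a factor of $T_f$ through the linear quasi-conjugation $\Gamma : L^p(X) \to \ell^p(\Z)$. Being linear and continuous, $\Gamma$ is uniformly continuous. Hence \cite[Proposition~6]{Bartoll2016} shows that if $T_f$ has the OSP (resp.\ the OSSP), then so does $B_w$.

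\emph{From $B_w$ to $T_f$: reduction.} Since OSSP implies OSP, it suffices to show: if $B_w$ has the OSP, then $T_f$ has the OSSP. By Corollary~\ref{CorBwlp}, the OSP for $B_w$ implies that $B_w$ has the OSSP and satisfies the summability condition (i) there. So it is enough to transfer the OSSP from $B_w$ to $T_f$.

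\emph{Step 1: a conjugacy.} For $\varphi \in L^p(X)$, set
\[
\Phi(\varphi) = (\varphi \circ f^k|_W)_{k \in \Z}.
\]
Since $(T_f\varphi)\circ f^k = \varphi\circ f^{k+1}$, $\Phi$ conjugates $T_f$ to the backward shift $S$ on $E=\{(g_k)_{k\in\Z} \subset L^p(W)\}$. The change of variables gives
\[
\int_{f^k(W)}|\varphi|^p\,d\mu=\int_W |\varphi\circ f^k|^p\, d(\mu\circ f^k).
\]
Bounded distortion says the measure $\mu\circ f^k$ on $W$ is comparable, with constants $K^{\pm1}$, to $\frac{\mu(f^k(W))}{\mu(W)}\,\mu|_W$. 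Therefore
\[
\|\varphi\|_p^p \asymp \sum_k \nu_k \|\varphi\circ f^k\|_{L^p(W)}^p, \qquad \nu_k=\mu(f^k(W)),
\]
so $\Phi$ is a linear homeomorphism onto $E=\ell^p_\nu(\Z;L^p(W))$. The weights $\nu_k$ are exactly the measure of the model $T_g$ for the shift with weights $w_k$ (up to the constant $\mu(W)$). Consequently, for any $h\in L^p(W)$, the map $J_h(x)=(x_k h)_{k}$ from $\ell^p(\Z)$ into $E$ is, after the standard identification of $B_w$ with the shift on $\ell^p_\nu$, a bounded linear map with $S J_h = J_h B_w$. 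I expect this step to be the main technical obstacle: checking the normalisations carefully so that the intertwining is exact, and that the norm equivalence constants are uniform in $k$, which is precisely where bounded distortion is used.

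\emph{Step 2: building the sets $K_m$ for $S$.} Let $(K'_m)$ be the compact invariant sets witnessing the OSSP for $B_w$, and let $(h_j)_{j \in \N}$ be a dense sequence in $L^p(W)$. Define
\[
K_m=\Big\{\sum_{j\le m} J_{h_j}(x_j) : x_1,\dots,x_m\in K'_m\Big\}.
\]
Each $K_m$ is compact, being the continuous image of $(K'_m)^m$. It is $S$-invariant, the sequence is increasing, and $0\in K_1$. The union is dense in $E$ because finite sums $\sum_j J_{h_j}(e_k)$-type vectors are dense, and $\bigcup_m K'_m$ is dense in $\ell^p(\Z)$.

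\emph{Step 3: the PSP on $K_m$.} Given $\eps>0$, let $C=\max_{j\le m}\|J_{h_j}\|$ and take $N_{m,\eps}=N'_{m,\eps/(mC)}$. Given points $\sum_j J_{h_j}(x^{(r)}_j)$, $1 \le r \le s$, and admissible times, apply the PSP of $B_w|_{K'_m}$ coordinatewise in $j$. This yields tracing points $x_j\in K'_m$, all with the same period $N_{m,\eps}+b_s$. Then $\sum_j J_{h_j}(x_j)\in K_m$ traces the specification within $\eps$ and has that exact period.

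\emph{Conclusion.} Pull the sets $K_m$ back by $\Phi^{-1}$. Since $\Phi^{-1}$ is a linear homeomorphism, hence uniformly continuous, \cite[Proposition~6]{Bartoll2016} shows that $T_f$ has the OSSP, and in particular the OSP.
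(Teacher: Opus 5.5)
Your proof is correct. The forward direction is the paper's: the linear quasi-conjugation $\Gamma$ together with \cite[Proposition~6]{Bartoll2016}. The converse, however, takes a genuinely different route.

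The paper argues through frequent hypercyclicity. The OSP of $B_w$ gives frequent hypercyclicity of $B_w$ by Corollary~\ref{CorBwlp}, hence of $T_f$ by \cite[Theorem~M]{DAnielloDarjiMaiuriello2}. Then \cite[Theorem~4.2]{Maiuriello2023} shows that $T_f$ satisfies the Frequent Hypercyclicity Criterion, and \cite[Theorem~4]{Bartoll2014} converts this into the OSSP.

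You instead use bounded distortion once, to get the uniform comparability of $\mu\circ f^k|_W$ with $\frac{\nu_k}{\mu(W)}\,\mu|_W$. This makes $\Phi$ a linear isomorphism conjugating $T_f$ to the plain shift on $\ell^p_\nu(\Z;L^p(W))$, i.e.\ to an $L^p(W)$-valued copy of $B_w$. Surjectivity, which you did not spell out, comes from defining $\varphi=g_k\circ f^{-k}$ piecewise on $f^k(W)$. You then build the compact invariant sets by tensoring the OSSP sets of $B_w$ with a dense sequence in $L^p(W)$. Finally, you apply the periodic specification of $B_w$ coordinatewise with a common period.

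The normalisation you were worried about works out exactly. With $w_k=(\nu_{k-1}/\nu_k)^{1/p}$, the model measure of $B_w$ is $k\mapsto\nu_k/\nu_0$. The map $x\mapsto((\nu_k/\nu_0)^{1/p}x_k)_{k}$ is an isometry of $\ell^p_{\nu/\nu_0}(\Z)$ onto $\ell^p(\Z)$ carrying the plain shift to $B_w$, so $SJ_h=J_hB_w$ holds exactly.

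Your approach gives a self-contained argument that never mentions frequent hypercyclicity; in fact the summability condition you extract from Corollary~\ref{CorBwlp} is never used. It shows exactly where bounded distortion enters, and it yields a reusable transfer principle from $B_w$ to its vector-valued amplification. The paper's route is much shorter given the literature, but it hides the mechanism inside three external theorems.

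The only implicit input on your side is separability of $L^p(W)$, needed for the dense sequence $(h_j)$. This is part of the standing framework, since the OSP and the OSSP are only defined on separable spaces, and the paper's route needs it too.
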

\begin{proof}
$((a), (b) \Rightarrow)$. It follows from the facts that $B_w$ is a factor of $T_f$ by means of a linear quasi-conjugation
and that the OSP and the OSSP are preserved by uniformly continuous quasi-conjugations.

\noindent
$((a), (b) \Leftarrow)$. To show this implication, assume that $B_w$ has the OSP (resp.\ the OSSP). 
By Corollary~\ref{CorBwlp}, this is equivalent to $B_w$ being frequently hypercyclic, meaning, 
by \cite[Theorem M]{DAnielloDarjiMaiuriello2}, that $T_f$ is frequently hypercyclic too. 
Hence, by \cite[Theorem 4.2]{Maiuriello2023}, $T_f$ satisfies the Frequent Hypercyclicity Criterion,
which implies that $T_f$ has the OSSP (hence, the OSP) by \cite[Theorem~4]{Bartoll2014}.
\end{proof}

We can now obtain a result analogous to Corollary~\ref{CorBwlp} for $T_f$.

\begin{thm}[Equivalences for $T_f$] \label{thmMAINT_f}
Let $T_f$ be a dissipative composition operator of bounded distortion on $L^p(X)$. Then, the following conditions are equivalent:
\begin{itemize}
\item [\rm (i)] $\mu(X) < \infty$;
\item [\rm (ii)] $T_f$ is Devaney chaotic;
\item [\rm (iii)] $T_f$ is frequently hypercyclic;
\item [\rm (iv)] $T_f$ has the OSP;
\item [\rm (v)] $T_f$ has the OSSP;
\item [\rm (vi)] $T_f$ has a nontrivial periodic point; 
\item [\rm (vii)] $T_f$ has a nontrivial fixed point;
\item [\rm (viii)] for each $q \in \N$, $T_f$ has a nontrivial periodic point with period $q$.
\end{itemize}
\end{thm}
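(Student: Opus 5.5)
The plan is to transfer everything from $B_w$ via Theorem~\ref{thmshiftlike} and Corollary~\ref{CorBwlp}. The link between (i) and the weights is direct. With $w_k = (\mu(f^{k-1}(W))/\mu(f^k(W)))^{1/p}$ we have
\[
\prod_{i=1}^n w_i^p = \frac{\mu(W)}{\mu(f^n(W))}
\quad\text{and}\quad
\prod_{i=-n+1}^{0} w_i^p = \frac{\mu(f^{-n}(W))}{\mu(W)}.
\]
Hence condition (i) of Corollary~\ref{CorBwlp} reads $\sum_{n\ge1}\mu(f^n(W)) < \infty$ and $\sum_{n\ge1}\mu(f^{-n}(W))<\infty$. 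Since $X=\dot\bigcup_k f^k(W)$ and $0<\mu(W)<\infty$, this is exactly $\mu(X)<\infty$. So (i) is equivalent to the summability condition for $B_w$, and therefore to each of (ii)--(viii) of Corollary~\ref{CorBwlp}, all stated for $B_w$.

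Next come the implications for $T_f$. Theorem~\ref{thmshiftlike} gives (iv) and (v) for $T_f$ equivalent to the OSP/OSSP of $B_w$, hence to (i). For (iii), Devaney chaos and frequent hypercyclicity coincide for $T_f$ by \cite[Theorem~3.7]{Darjipires}, giving (ii)$\Leftrightarrow$(iii). The OSP implies Devaney chaos \cite[Proposition~12]{Bartoll2016}, so (iv)$\Rightarrow$(ii). Conversely, frequent hypercyclicity of $T_f$ passes to its factor $B_w$, since hypercyclic-type properties descend along quasi-conjugations, or by \cite[Theorem M]{DAnielloDarjiMaiuriello2}. This gives (iii)$\Rightarrow$ (iii) of Corollary~\ref{CorBwlp} $\Rightarrow$ (i). Trivially (viii)$\Rightarrow$(vii)$\Rightarrow$(vi).

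It remains to close the loop through the periodic points: (i)$\Rightarrow$(viii) and (vi)$\Rightarrow$(i). For (i)$\Rightarrow$(viii) I would construct the points directly when $\mu(X)<\infty$. For $q\in\N$, the function
\[
\varphi = \sum_{k\in\Z} c_{k \bmod q}\,\rchi_{f^k(W)}
\]
with constants $c_0,\dots,c_{q-1}$ is bounded on a finite measure space, hence lies in $L^p(X)$. It satisfies $T_f^q\varphi=\varphi$, because $\rchi_{f^k(W)}\circ f=\rchi_{f^{k-1}(W)}$. Choosing the $c_s$ not all equal to $c_{s+j \bmod q}$ for $0<j<q$ (e.g.\ $c_0=1$, others $0$) gives exact period $q$.

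The main obstacle is (vi)$\Rightarrow$(i): a nontrivial periodic $\varphi$ of $T_f$ must force $\mu(X)<\infty$. I would push $\varphi$ through the linear quasi-conjugation $\Gamma$ to get a periodic point $\Gamma\varphi$ of $B_w$, and apply Corollary~\ref{CorBwlp}(vi)$\Rightarrow$(i) if $\Gamma\varphi\neq0$. Since $\Gamma$ presumably averages over each $f^k(W)$, $\Gamma\varphi$ might vanish. If so, I would argue directly instead. If $T_f^q\varphi=\varphi$, then $\varphi\circ f^{nq}=\varphi$. By bounded distortion, the distribution of $\varphi$ on $f^{k+nq}(W)$, transported to $W$ by $f^{-(k+nq)}$, is the same as on $f^k(W)$. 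Bounded distortion also makes the transported measure comparable to $\mu|_W$ rescaled by $\mu(f^{k+nq}(W))/\mu(W)$. Hence
\[
\int_{f^{k+nq}(W)}|\varphi|^p\,d\mu \ \ge\ \frac{1}{K}\,\frac{\mu(f^{k+nq}(W))}{\mu(f^k(W))}\int_{f^k(W)}|\varphi|^p\,d\mu.
\]
Pick $k$ with $\int_{f^k(W)}|\varphi|^p\,d\mu>0$. Summing over $n\in\Z$ and using $\varphi\in L^p$ then yields $\sum_n\mu(f^{k+nq}(W))<\infty$. A similar estimate in the other direction, $\mu(f^{j+1}(W))\le K'\mu(f^j(W))$ (from $(\star)$ and the invertibility of the system), extends this bound to all residues mod $q$. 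Hence $\mu(X)<\infty$.
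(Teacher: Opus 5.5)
Your proof is correct. For (i)--(v) and for (i)$\Rightarrow$(viii) it is essentially the paper's argument. The paper transfers Devaney chaos and frequent hypercyclicity between $T_f$ and $B_w$ by citing \cite[Theorem~M]{DAnielloDarjiMaiuriello2}, where you combine the Darji--Pires equivalence with the descent of frequent hypercyclicity to factors. Your test function with $c_0=1$ and $c_s=0$ otherwise is exactly the paper's $\sum_n\rchi_{f^{nq}(W)}$.

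The genuine difference is how you close the loop from (vi). The paper proves (vi)$\Rightarrow$(v) through $\Gamma$. It removes your worry that $\Gamma\varphi$ might vanish by replacing $\varphi$ with $|\varphi|$, which is still a nonzero $q$-periodic point. Since $(\star)$ for $f$ and $f^{-1}$ gives $\int_{f^k(W)}\psi\,d\mu\le c^{|k|}\int_W\psi\circ f^k\,d\mu$ for $\psi\ge 0$, $\Gamma$ cannot annihilate a nonzero nonnegative function. So $\Gamma(|\varphi|)$ is a nontrivial periodic point of $B_w$, and Corollary~\ref{CorBwlp} with Theorem~\ref{thmshiftlike}(b) finishes.

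Your direct estimate bypasses $\Gamma$ and is valid unconditionally, so present it as the proof rather than as a fallback. The steps are:
\begin{itemize}
\item periodicity gives $\varphi\circ f^{k+nq}=\varphi\circ f^k$ a.e.\ on $W$;
\item bounded distortion makes $\int_{f^m(W)}|\varphi|^p\,d\mu$ comparable to $\frac{\mu(f^m(W))}{\mu(W)}\int_W|\varphi\circ f^m|^p\,d\mu$;
\item summing over $n$ yields $\sum_n\mu(f^{k+nq}(W))<\infty$;
\item $\mu(f(B))\le c\,\mu(B)$ spreads this to every residue class.
\end{itemize}
Since you compare both blocks with $W$, the constant in your displayed inequality should be $K^{-2}$ rather than $K^{-1}$, which is immaterial.

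As for what each buys: the paper's route is shorter once $\Gamma$ and the weighted-shift corollary are available, and it uses bounded distortion only implicitly, through $\Gamma$ and Theorem~\ref{thmshiftlike}. Yours is self-contained and isolates exactly where bounded distortion enters. That is the lower bound $\int_{f^{k+nq}(W)}|\varphi|^p\,d\mu\ge C\,\mu(f^{k+nq}(W))$ for a periodic $\varphi$, with $C>0$ independent of $n$. This is precisely what breaks down in Example~\ref{BDH-Necessary}.
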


\begin{proof}
Let $B_w$ be the bilateral weighted backward shift on $\ell^p(\Z)$ associated to the dissipative composition operator $T_f$.
Since
\[
w_{k} = \left(\frac{\mu(f^{k-1}(W))}{\mu(f^{k}(W))}\right)^{\frac{1}{p}} \ \ \text{ for all } k \in \Z,
\]
condition (i) in Corollary~\ref{CorBwlp} can be rewritten as
\[
\sum_{n=1}^{\infty} \frac{\mu(f^n(W))}{\mu(W)} < \infty \ \text{ and } \ \sum_{n=1}^{\infty} \frac{\mu(f^{-n}(W))}{\mu(W)}  < \infty,
\]
which is equivalent to condition (i) above.
Moreover, by \cite[Theorem M]{DAnielloDarjiMaiuriello2} and Theorem~\ref{thmshiftlike},
$T_f$ is Devaney chaotic (resp.\ is frequently hypercyclic, has the OSP, has the OSSP) if and only if so does $B_w$.
Therefore, the equivalences between (i)--(v) follow from the corresponding equivalences in Corollary~\ref{CorBwlp}.

Let us prove that (vi) $\Rightarrow$ (v).
Since both $f$ and $f^{-1}$ satisfy condition ($\star$), there exists a constant $c > 0$ such that
\[
\mu(f^{-1}(B)) \leq c\, \mu(B) \ \ \text{ and } \ \ \mu(f(B)) \leq c\, \mu(B) \ \text{ for all } B \in {\mathcal B}.
\]
Consequently, a simple application of the monotone convergence theorem shows that
\begin{equation}\label{S5-Eq1}
\int_{f^k(B)} \psi\, d\mu \leq c^{|k|} \int_B \psi \circ f^k d\mu,
\end{equation}
whenever $\psi : X \to [0,\infty]$ is measurable, $B \in {\mathcal B}$ and $k \in \Z$.
On the other hand, $B_w$ is a factor of $T_f$ by means of the linear quasi-conjugation $\Gamma : L^p(X) \to \ell^p(\Z)$ given by
\begin{equation}\label{S5-Eq2}
\Gamma(\varphi)(k) = \frac{\mu(f^k(W))^\frac{1}{p}}{\mu(W)} \int_W \varphi \circ f^k d\mu
\end{equation}
(see \cite[Lemma~4.2.3]{DAnielloDarjiMaiuriello} or \cite[Page~2]{DAnielloDarjiMaiuriello2}).
By hypothesis, $T_f$ has a nontrivial periodic point $\varphi \in L^p(X)$, say $(T_f)^q(\varphi) = \varphi$.
Replacing $\varphi$ with $|\varphi|$, if necessary, we can assume that $\varphi \geq 0$.
If $\Gamma(\varphi)$ were $0$, then (\ref{S5-Eq1}) and (\ref{S5-Eq2}) would imply that
\[
\int_{f^k(W)} \varphi\, d\mu \leq c^{|k|} \int_W \varphi \circ f^k d\mu = 0 \ \ \text{ for all } k \in \Z,
\]
which would give $\varphi = 0$, a contradiction.
Thus, $x = \Gamma(\varphi)$ is a nonzero vector in $\ell^p(\Z)$.
Since $\Gamma \circ T_f = B_w \circ \Gamma$, we obtain
\[
(B_w)^q(x) = (B_w)^q(\Gamma(\varphi)) = \Gamma((T_f)^q(\varphi)) = \Gamma(\varphi) = x.
\]
This proves that $B_w$ has a nontrivial periodic point.
By Corollary~\ref{CorBwlp}, $B_w$ has the OSSP and, therefore, so does $T_f$ in view of Theorem~\ref{thmshiftlike}(b).

Finally, since the implications (viii) $\Rightarrow$ (vii) $\Rightarrow$ (vi) are trivial, it remains to prove that (i) $\Rightarrow$ (viii).
For this purpose, take $q \in \N$. Since $\mu(X) < \infty$, the series 
\[
\varphi = \sum_{n = - \infty}^{\infty} \rchi_{f^{nq}(W)}
\]
converges and defines an element of $L^p(X)$. Since
\[
(T_f)^j(\varphi) = \varphi \circ f^j = \sum_{n = - \infty}^{\infty} \rchi_{f^{nq}(W)} \circ f^j = \sum_{n = - \infty}^{\infty} \rchi_{f^{nq-j}(W)}
\]
is equal to $\varphi$ for $j = q$ and is different from $\varphi$ for $j \in \{1,\ldots,q-1\}$,
we have that $\varphi$ is a nontrivial periodic point of $T_f$ with period $q$.
\end{proof}

\begin{rmk}
In \cite[Observation 4]{Bartoll2016} the authors point out that, 
although they removed the compactness assumption of each $K_m$ from the definition of the OSP, 
it is hard to think of a map having the specification property outside of the compact setting; 
in other words, for all cases they know of operators having the OSP, the required sets $K_m$ are always compact.
As we have just shown in Theorem \ref{thmMAINT_f}, the OSP and the OSSP are equivalent
for dissipative composition operators of bounded distortion. 
Hence, the question remains: 
{\em it is evident that the OSSP implies the more general OSP, but are there any operators that have the OSP but not the OSSP?}
\end{rmk}

\subsection{A special case: when $T_f= T_{\alpha}$, the translation operator}

Let $(X,\B,\mu,f)$ be a dissipative composition dynamical system generated by $W$. 
Let $\frac{d \mu(f^k)}{d \mu}$ denote the Radon-Nikodym derivative of $\mu(f^k)$ with respect to $\mu$. 
We recall the following definition from \cite{DAnielloDarjiMaiuriello}. 

\begin{defn} 
Take $\alpha > 0$. As in \cite[Section 3.2]{DAnielloDarjiMaiuriello} with $\alpha =1$, 
take $X = \R$, $\B$ the collection of Borel subsets of $\R$, and $f_{\alpha}(x) = x + \alpha$. 
Note that, independent of the measure $\mu$ we choose on $\R$, we get a dissipative system generated by $W = [0,\alpha)$. 
Let $\mu$ on $\B$ be generated by a density $h$, i.e.,
\[
\mu(B) = \int_{B} h\, dx, 
\]
where $h$ is a non-negative locally integrable Lebesgue measurable function. 
Then $(\R,\B,\mu)$ is a $\sigma$-finite measure space. 
Let $T_{\alpha} = T_{f_\alpha}$ be the {\em translation operator} defined on $L^{p}(\R,\B,\mu)$, $1 \leq p < \infty$, as 
\[
T_{\alpha} \varphi (x) = (\varphi \circ f_{\alpha})(x)=  \varphi (x+ \alpha)
\] 
for every $x \in {\mathbb R}$. 
\end{defn}

As it is simple to compute, for every $k \in \Z$ and every $x \in W$ with $h(x) \neq 0$,  
\[ 
\frac{d \mu(f_{\alpha}^k)}{d \mu} = \frac{h(x + k \alpha)}{h(x)}\cdot
\]

The next result follows directly from \cite[Theorem 2.1.1]{SinghManhas1993}.

\begin{prop}
$T_{\alpha}$ is an invertible continuous composition operator on $L^p(\R,\B,\mu)$, that is, 
it maps, together with its inverse, $L^p(\R,\B,\mu)$ continuously into itself, if and only if 
there exists a constant $C > 0$ such that
\[
\max\{h(x+\alpha), h(x-\alpha)\} \leq C\, h(x) \ \ \text{ for all } x \in \R.  \hspace{2cm} (\star \star)
\]
\end{prop}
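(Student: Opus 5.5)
The plan is to reduce the proposition to the general boundedness criterion for composition operators, \cite[Theorem 2.1.1]{SinghManhas1993}. That criterion says the following: for a nonsingular measurable map $g$ on a $\sigma$-finite space, $T_g$ is a bounded operator on $L^p$ if and only if there is $c>0$ with $\mu(g^{-1}(B)) \leq c\,\mu(B)$ for every $B \in \B$. This is exactly condition ($\star$). Equivalently, the Radon--Nikodym derivative $d\mu\circ g^{-1}/d\mu$ must be essentially bounded by $c$. I will apply the criterion twice, once to $g=f_\alpha$ and once to $g=f_\alpha^{-1}=f_{-\alpha}$. This is justified because $T_\alpha$ is invertible with inverse $T_{f_{-\alpha}}$ precisely when both of these composition operators are bounded. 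Both maps are bijective and bimeasurable, so invertibility reduces entirely to boundedness of these two operators.

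The computation itself is short. Since $\mu$ has density $h$, translation invariance of Lebesgue measure gives
\[
\mu(f_\alpha^{-1}(B)) = \mu(B-\alpha) = \int_{B-\alpha} h\,dx = \int_B h(x-\alpha)\,dx,
\]
and, in the same way, $\mu(f_\alpha(B)) = \int_B h(x+\alpha)\,dx$. The two boundedness conditions therefore read
\[
\int_B h(x\mp\alpha)\,dx \leq c \int_B h(x)\,dx \quad \text{for every Borel set } B.
\]
By the standard argument (take $B=\{x : h(x\mp\alpha) > c\,h(x)\}\cap[-n,n]$ and use local integrability to keep everything finite), this holds if and only if $h(x\mp\alpha) \leq c\,h(x)$ for almost every $x$. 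Combining the two directions gives ($\star\star$) with $C=c$. For the converse, ($\star\star$) implies both integral inequalities immediately. Nonsingularity also follows from these two inequalities, because together they make $\mu(B)=0 \iff \mu(B\pm\alpha)=0$.

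The main obstacle is a technical subtlety. ($\star\star$) is stated for all $x$, whereas the measure-theoretic argument only yields it almost everywhere. Moreover, $h$ is defined only up to null sets, since it is a locally integrable function. I would handle this by reading ($\star\star$) for $h$ as an element of $L^1_{loc}$, which means almost everywhere. Alternatively, once the a.e.\ inequality is established, one can modify $h$ on a null set that is invariant under $\alpha\Z$-translations so that the inequality holds everywhere. Such a modification does not change $\mu$. A second, minor point is that points where $h=0$ are harmless: ($\star\star$) forces $h$ to vanish on whole $\alpha\Z$-orbits of such points, which is consistent with nonsingularity.
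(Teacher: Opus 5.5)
Your proposal is correct and takes the same route as the paper, which simply invokes \cite[Theorem 2.1.1]{SinghManhas1993}: you apply it to $f_\alpha$ and $f_\alpha^{-1}=f_{-\alpha}$, compute $\mu(B\mp\alpha)=\int_B h(x\mp\alpha)\,dx$, and pass from the integral inequalities to pointwise ones, which are exactly the details the paper leaves implicit. Your observation that this argument only yields ($\star\star$) Lebesgue-almost everywhere is a legitimate point the paper glosses over, and your repair (redefining $h$, e.g.\ as $0$, on an $\alpha\Z$-invariant null set, which leaves $\mu$ unchanged) handles it; also note that $C$ should be taken as the larger of the two constants you obtain.
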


The next result follows directly from Theorem \ref{thmMAINT_f}.

\begin{cor}[Equivalences for $T_{\alpha}$] \label{thmMAINTa}
Consider $(\R,\B,\mu,f_{\alpha})$ the dissipative composition dynamical system, generated by $W = [0, \alpha)$.
Let $T_{\alpha} : L^p(\R,\B,\mu) \to L^p(\R,\B,\mu)$ be of bounded distortion, with $\mu$ given by a density $h$.  
Then, the following conditions are equivalent:
\begin{itemize}
\item [\rm (i)] $\mu(\R) < \infty$ (equivalently, $h \in L^{1}(\R)$); 
\item [\rm (ii)] $T_{\alpha}$ is Devaney chaotic;
\item [\rm (iii)] $T_{\alpha}$ is frequently hypercyclic;
\item [\rm (iv)] $T_{\alpha}$ has the OSP;
\item [\rm (v)] $T_{\alpha}$ has the OSSP;
\item [\rm (vi)] $T_{\alpha}$ has a nontrivial periodic point; 
\item [\rm (vii)] $T_{\alpha}$ has a nontrivial fixed point;
\item [\rm (viii)] for each $q \in \N$, $T_{\alpha}$ has a nontrivial periodic point with period $q$.
\end{itemize}
\end{cor}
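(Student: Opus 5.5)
The plan is to deduce the corollary from Theorem~\ref{thmMAINT_f}, applied to the measurable system $(\R,\B,\mu,f_\alpha)$ with wandering set $W=[0,\alpha)$. Concretely, we check that the standing hypotheses of Section~\ref{s-OSP,Tf} hold here. Then items (ii)--(viii) of the corollary coincide word for word with items (ii)--(viii) of Theorem~\ref{thmMAINT_f}. Item (i) coincides with $\mu(X)<\infty$ once we note that $\mu(\R)=\int_\R h\,dx$.

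First we verify that $(\R,\B,\mu,f_\alpha)$ is an invertible dissipative measurable system. The map $f_\alpha$ is a bijective bimeasurable translation, and its inverse is $f_{-\alpha}$. The space $(\R,\B,\mu)$ is $\sigma$-finite because $h$ is locally integrable. The disjoint decomposition holds since $\R=\dot\bigcup_{k\in\Z}[k\alpha,(k+1)\alpha)=\dot\bigcup_{k\in\Z} f_\alpha^k(W)$.

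Next we need $0<\mu(W)<\infty$, together with condition ($\star$) for both $f_\alpha$ and $f_\alpha^{-1}$, and nonsingularity. Finiteness of $\mu(W)$ is local integrability. The remaining properties follow from the implicit assumption that $T_\alpha$ is an invertible operator on $L^p(\R,\B,\mu)$, hence $(\star\star)$ holds by the preceding Proposition. From $(\star\star)$ we get
\[
\mu(f_\alpha^{\mp1}(B))=\int_B h(x\pm\alpha)\,dx\le C\mu(B),
\]
which is ($\star$) in both directions and also gives nonsingularity. Iterating $(\star\star)$ shows that if $\mu(W)=0$, then $\mu(f_\alpha^k(W))=0$ for all $k$, so $\mu\equiv0$. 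We exclude this degenerate case by the requirement $\mu(X)>0$, and hence $\mu(W)>0$.

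Bounded distortion is assumed in the statement. Thus Theorem~\ref{thmMAINT_f} applies and gives all the equivalences. The only possible obstacle is bookkeeping, namely making explicit that continuity and invertibility of $T_\alpha$ (equivalently $(\star\star)$) is part of the setting and that $\mu(\R)>0$. Beyond this verification, no new argument is needed.
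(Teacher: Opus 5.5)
Your proposal is correct and follows the paper's route: the paper simply says the corollary follows directly from Theorem~\ref{thmMAINT_f}, and your check that $(\R,\B,\mu,f_\alpha)$ is an invertible dissipative measurable system (using $(\star\star)$, local integrability of $h$, and $\mu(\R)>0$) is bookkeeping that the paper leaves implicit. One harmless slip: the change of variables gives $\mu(f_\alpha^{-1}(B))=\mu(B-\alpha)=\int_B h(x-\alpha)\,dx$ and $\mu(f_\alpha(B))=\int_B h(x+\alpha)\,dx$, so your signs are swapped, but since $(\star\star)$ bounds both translates the conclusion is unaffected.
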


We recall the following result from \cite[Proposition 2.6.6]{DAnielloDarjiMaiuriello}.

\begin{prop}[Bounded RN Condition] \label{prodDDM} 
Let $(X, {\mathcal B}, \mu, f)$ be a dissipative system generated by $W$. 
Let ${\rho}_k = \frac{d \mu(f^k)}{d \mu}$, $m_{k}=  {\essinf}_{x \in W}{{\rho}}_{k}(x)$ and $M_{k} = \esssup_{x \in W} {\rho}_{k}(x)$. 
If $\{\frac{M_{k}}{m_{k}}\}_{k \in {\mathbb Z}}$ is bounded, then $f$ is of bounded distortion on $W$.
\end{prop}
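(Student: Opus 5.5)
We must prove Proposition~\ref{prodDDM}: if $\{M_k/m_k\}_{k\in\Z}$ is bounded, say $M_k \le K m_k$ for all $k$, then the bounded distortion inequality holds on $W$ with constant $K$. The plan is to express $\mu(f^k(B))$ for $B \in \B(W)$ as an integral of the Radon--Nikodym derivative over $B$, and then sandwich it between $m_k\mu(B)$ and $M_k\mu(B)$.

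First we fix the convention. Here $\rho_k = \frac{d\mu(f^k)}{d\mu}$ is the density of the measure $B \mapsto \mu(f^k(B))$ with respect to $\mu$. This measure is absolutely continuous with respect to $\mu$ because $f$ is nonsingular and bimeasurable and the system is invertible, so $\rho_k$ exists by Radon--Nikodym, using $\sigma$-finiteness. Hence, for every $B \in \B(W)$,
\[
\mu(f^k(B)) = \int_B \rho_k \, d\mu .
\]
Since $m_k \le \rho_k \le M_k$ $\mu$-a.e.\ on $W$, we get
\[
m_k\,\mu(B) \le \mu(f^k(B)) \le M_k\,\mu(B).
\]
Taking $B = W$ in the same bound gives $m_k\,\mu(W) \le \mu(f^k(W)) \le M_k\,\mu(W)$.

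Next we combine the two bounds. On the upper side,
\[
\mu(f^k(B))\,\mu(W) \le M_k\,\mu(B)\,\mu(W) \le K m_k\,\mu(W)\,\mu(B) \le K\,\mu(f^k(W))\,\mu(B).
\]
On the lower side,
\[
\mu(f^k(B))\,\mu(W) \ge m_k\,\mu(B)\,\mu(W) \ge \tfrac{1}{K} M_k\,\mu(W)\,\mu(B) \ge \tfrac{1}{K}\,\mu(f^k(W))\,\mu(B).
\]
These are exactly the two inequalities in the definition of bounded distortion, with constant $K$.

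The main point needing care is degeneracy. We need $m_k > 0$ and $M_k < \infty$, so that the boundedness of the ratio is meaningful. Since $0 < \mu(f^k(W)) < \infty$, this is implicit in the hypothesis; if $m_k = 0$, the ratio is interpreted as infinite and excluded by the assumption. We also need the convention that $\rho_k$ is the derivative of $\mu\circ f^k$, rather than of $\mu\circ f^{-k}$, so that $\mu(f^k(B)) = \int_B \rho_k \, d\mu$. If the other convention is intended, the same argument applies with the index replaced by $-k$, and boundedness over all $k \in \Z$ is symmetric under this change.
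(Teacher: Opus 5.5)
Your proposal is correct. The paper gives no proof of its own, only citing \cite[Proposition~2.6.6]{DAnielloDarjiMaiuriello}, and your route is a complete proof with constant $K=\sup_k M_k/m_k$: write $\mu(f^k(B))=\int_B\rho_k\,d\mu$, sandwich it between $m_k\mu(B)$ and $M_k\mu(B)$, then apply the same bound to $W$. Your reading of $\rho_k$ as the density of $B\mapsto\mu(f^k(B))$ is also the paper's convention, as its computation $\frac{d\mu(f_\alpha^k)}{d\mu}=\frac{h(x+k\alpha)}{h(x)}$ for the translation operator confirms.
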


\begin{exmp} 
Take the standard Cauchy distribution on $\R$, defined by the following probability density function
\[
h(x) = \frac{1}{\pi (1 + x^2)}\cdot
\]
Let $\mu$ be the probability measure whose density is $h$. Then, for each $k \in \Z$,
\[
\frac{d \mu(f_{\alpha}^k)}{d \mu} = \frac{h(x + k \alpha)}{h(x)} = \frac{1 + x^2}{1 + {(x + k \alpha)}^2}\cdot
\]
For $x \in W = [0, \alpha)$ and $k \geq 0$, we have
\[
\frac{1}{1 + {[(k+1) \alpha]}^2}  \leq \frac{1 + x^2}{1 + {(x + k \alpha)}^2}  \leq  \frac{1+ {\alpha}^{2}}{1 + {(k \alpha)}^2}\,,
\]
which implies that 
\[
M_k \leq \frac{1+ {\alpha}^{2}}{1 + {(k \alpha)}^2} \ \ \text{ and } \ \ m_k \geq \frac{1}{1 + {[(k+1) \alpha]}^2} \ \text{ for every } k \geq 0.
\] 
For $x \in W = [0, \alpha)$ and $k \leq 0$, we have
\[
\frac{1}{1 + {(k \alpha)}^2}  \leq \frac{1 + x^2}{1 + {(x + k \alpha)}^2}  \leq  \frac{1+ {\alpha}^{2}}{1 + {[(k+1) \alpha]}^2}\,,
\]
which implies that 
\[
M_k \leq \frac{1+ {\alpha}^{2}}{1 + {[(k+1) \alpha]}^2} \ \ \textrm{ and } \ \ m_k \geq \frac{1}{1 + {(k \alpha)}^2} \ \text{ for every } k \leq 0.
\]
Therefore, $\rho_k$ is bounded and, hence, by the Bounded RN Condition in Proposition~\ref{prodDDM}, 
$f_{\alpha}$ is of bounded distortion. 
So, $(\R,\B,\mu,f_\alpha)$ is a dissipative composition dynamical system generated by $W = [0, \alpha)$, of bounded distortion, with
\[
\mu(\R) = \int_{\R} h\, dx = \int_{\R} \frac{1}{\pi (1+ x^2)}\, dx = 1.
\]
Thus, by Theorem \ref{thmMAINTa}, $T_{\alpha}$ has the OSSP.
\end{exmp}

\section{The important role played by the bounded distortion}

In the study of dissipative composition operators on $L^p(X)$ spaces, 
the bounded distortion hypothesis appears in the statement of some theorems. 
For instance, the following characterization of the shadowing property was obtained in \cite{DAnielloDarjiMaiuriello}.

\begin{thm}\label{ShadTf}
Fix a real number $p \in [1,\infty)$ and let $X$, $\B$, $\mu$ and $f$ be as in Section~\ref{s-OSP,Tf}.
Suppose that the composition operator $T_f$ on $L^p(X)$ is dissipative with wandering set $W$.
If $T_f$ has bounded distortion, then $T_f$ has the shadowing property if and only if one of the following conditions holds: 
\begin{equation*}
\limsup_{n \to \infty} \sup_{k \in \Z} \left(\frac{\mu(f^{k}(W))}{\mu(f^{k+n}(W))}\right)^{\frac{1}{n}} < 1, \tag*{($\mathcal{HC}$)}   
\end{equation*}
\begin{equation*}
\liminf_{n \to \infty} \inf_{k \in \Z} \left(\frac{\mu(f^{k}(W))}{\mu(f^{k+n}(W))}\right)^{\frac{1}{n}} > 1, \tag*{($\mathcal{HD}$)} 
\end{equation*}
\begin{equation*}
\limsup_{n \to \infty} \sup_{k \in -\N_0} \left(\frac{\mu(f^{k-n}(W))}{\mu(f^{k}(W))}\right)^{\frac{1}{n}} < 1 
  \ \  \& \ \ 
\liminf_{n \to \infty} \inf_{k \in \N_0} \left (\frac{\mu(f^{k}(W))}{\mu(f^{k+n}(W))}\right)^{\frac{1}{n}} > 1. \tag*{($\mathcal{GH}$)}
\end{equation*}
\end{thm}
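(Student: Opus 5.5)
Theorem~\ref{ShadTf} is the characterization of shadowing for dissipative composition operators of bounded distortion. I would prove it by transferring the problem to the associated bilateral weighted backward shift $B_w$ on $\ell^p(\Z)$, with weights
\[
w_k = \left(\frac{\mu(f^{k-1}(W))}{\mu(f^{k}(W))}\right)^{\frac{1}{p}},
\]
and then invoking the characterization of shadowing for weighted shifts from \cite[Theorem~18]{BeMe21} recalled in the proof of Theorem~\ref{OSP-Shad}. The products telescope:
\[
|w_{k+1}\cdots w_{k+n}| = \left(\frac{\mu(f^{k}(W))}{\mu(f^{k+n}(W))}\right)^{\frac{1}{p}}.
\]
Hence $(|w_{k+1}\cdots w_{k+n}|)^{1/n}$ is the $p$-th root of the quantity appearing in ($\mathcal{HC}$), ($\mathcal{HD}$) and ($\mathcal{GH}$). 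Raising to the power $p$ does not change the comparisons with $1$. The shift between $n$ and $n+1$ factors, and the index shift by one, are harmless, since the weights are bounded above and below by ($\star$) for $f$ and $f^{-1}$. So the three conditions of \cite[Theorem~18]{BeMe21} translate exactly into ($\mathcal{HC}$), ($\mathcal{HD}$) and ($\mathcal{GH}$), with $k$ ranging over $\Z$, $-\N_0$ and $\N_0$ respectively. This step is bookkeeping.

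The substantive step is showing that $T_f$ has the shadowing property if and only if $B_w$ does. For this I would use an explicit structure rather than just the factor map $\Gamma$ from (\ref{S5-Eq2}), because shadowing does not in general pass to factors, nor lift from them. Bounded distortion lets us split
\[
L^p(X) = L^p\text{-functions of the form } \textstyle\sum_k c_k \rchi_{f^{k}(W)}\ \oplus\ \ker \Gamma ,
\]
where the first summand consists of functions constant on each $f^k(W)$, and both summands are $T_f$-invariant. On the first summand, $T_f$ is isometrically conjugate to $B_w$ via $\Gamma$: indeed $\|\rchi_{f^k(W)}\|_p = \mu(f^k(W))^{1/p}$, and these are exactly the normalizations in (\ref{S5-Eq2}). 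The complementary projection $\varphi \mapsto \varphi - \sum_k (\text{average of } \varphi \text{ on } f^k(W))\rchi_{f^k(W)}$ is bounded precisely because of the distortion constant $K$.

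On $\ker\Gamma$, I would write $\varphi|_{f^k(W)} = \psi_k\circ f^{-k}$ with $\psi_k \in L^p_0(W)$, the mean-zero functions on $W$. Bounded distortion then shows that $T_f$ acts, up to uniformly equivalent norms, as the same weighted shift $B_w$ tensored with the identity on $L^p_0(W)$, that is, as a vector-valued weighted shift with the same scalar weights.

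The conclusion then goes as follows. Shadowing of a direct sum of invariant pieces with bounded projections is equivalent to shadowing of each piece. A vector-valued weighted shift with scalar weights satisfies the same criterion from \cite{BeMe21}, whose proof is coordinate-free in the weights. Hence $T_f$ has shadowing if and only if $B_w$ does.

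I expect the main obstacle to be this equivalence between the $\ker\Gamma$ piece and a vector-valued weighted shift, with constants uniform in $k$. I would first verify the uniform norm equivalence
\[
\int_{f^k(W)}|\psi\circ f^{-k}|^p\,d\mu \asymp \frac{\mu(f^k(W))}{\mu(W)}\int_W|\psi|^p\,d\mu ,
\]
with constants depending only on $K$. I would derive this from the distortion inequality applied to level sets of $|\psi|$, via simple functions and monotone convergence. If this route proves too cumbersome, the fallback is to cite the corresponding result in \cite{DAnielloDarjiMaiuriello}.
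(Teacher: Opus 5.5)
The paper gives no proof of this theorem. It is imported from \cite{DAnielloDarjiMaiuriello} and used only as a foil for Example~\ref{ex1}.

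Your reduction is a correct, self-contained alternative. Its real content is the norm equivalence you single out, with constants $K^{\pm1}$, obtained via simple functions and monotone convergence. That equivalence says precisely that
\[
J\varphi=\Big(\big(\mu(f^k(W))/\mu(W)\big)^{1/p}\,\varphi\circ f^k|_W\Big)_{k\in\Z}
\]
is an isomorphism of $L^p(X)$ onto $\ell^p(\Z;L^p(W))$ with $\|J\|,\|J^{-1}\|\le K^{1/p}$. A one-line computation then gives $JT_f=(B_w\otimes I)J$, with the same weights $w_k$. So the detour through $\Gamma$ is optional: $E=J^{-1}(\ell^p(\Z)\otimes\rchi_W)$ and $\ker\Gamma=J^{-1}(\ell^p(\Z;L^p_0(W)))$.

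Compared with the citation, your route shows exactly where bounded distortion enters. In Example~\ref{ex1}, $J$ is unbounded: $\|\rchi_{\{b_k\}}\|_p=1$ while $\|J\rchi_{\{b_k\}}\|_p^p=2^{k-1}$. It also yields, as a by-product, that shadowing of $T_f$ is equivalent to generalized hyperbolicity. The citation, for its part, costs the present paper nothing.

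Two points need tightening.

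First, the coefficients of the projection onto $E$ must be $\frac{1}{\mu(W)}\int_W\varphi\circ f^k\,d\mu$, not the $\mu$-averages of $\varphi$ over $f^k(W)$. The $\mu$-averages give a norm-one projection whose kernel is in general not $T_f$-invariant, because $\int_{f^k(W)}\varphi\circ f\,d\mu$ is an integral against $\mu\circ f^{-1}$ rather than $\mu$. The pulled-back averages give a projection with kernel exactly $\ker\Gamma$ and norm at most $K^{1/p}$, by Jensen plus your norm equivalence.

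Second, do not rest on the claim that the proof in \cite{BeMe21} is coordinate-free. For the vector-valued piece you only need sufficiency. Necessity already follows from the complemented invariant summand $E$, on which $T_f$ is isometrically $B_w$. For sufficiency, argue as follows:
\begin{itemize}
\item Condition ($\mathcal{HC}$) makes $B_w\otimes I$ hyperbolic, since the norms of its powers coincide with the scalar ones.
\item Condition ($\mathcal{HD}$) likewise makes $B_w\otimes I$ hyperbolic.
\item Condition ($\mathcal{GH}$) makes $B_w\otimes I$ generalized hyperbolic, using the splitting into sequences supported on $k\le 0$ and on $k\ge 1$; the restricted norms again coincide with the scalar ones.
\end{itemize}
Generalized hyperbolic operators are known to have the shadowing property, which completes the argument.
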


\medskip
Another example where the bounded distortion hypothesis is assumed can be found in the characterization 
of frequent hypercyclicity and Devaney chaos for dissipative composition operators obtained in \cite{Darjipires}.

However, to the best of the authors' knowledge, whether the bounded distortion hypothesis 
is actually essential for the validity of such theorems remained an open question.

In this section, we begin to develop an investigation into the importance of the bounded distortion hypothesis,
presenting the example below, which shows that it is indeed essential for the validity of Theorem~\ref{ShadTf}.

\begin{exmp} \label{ex1}
Let 
\[
X = \{a_j : j \in \Z\} \cup \{b_j : j \in \Z\},
\]
where the $a_j$'s and $b_j$'s are pairwise distinct objects.
Let $\B = {\cal P}(X)$ (the power set of $X$) and let $f : X \to X$ be given by 
\[
f(a_j) = a_{j+1} \ \ \text{ and } \ \ f(b_j) = b_{j+1}.
\]
For each $j \in \Z$, let $W_j = \{a_j,b_j\}$, and define $W = W_0$. Clearly,
\[
X = \dot\bigcup_{k \in \Z}\, f^k(W) \ \ \ \ \ (\text{disjoint union}),
\]
so that we have dissipativity.
We define a measure $\mu$ on $\B$ by putting \\
\begin{center}
$\begin{cases}
   \mu(\{a_j\}) = \mu(\{b_j\}) = \frac{1}{2^{-j+1}}   & \text{for }j \leq 0 \\
    \mu(\{a_j\}) = 2^j - 1 \text{ and }  \mu(\{b_j\})=1  & \text{for } j \geq 1.
\end{cases}$
\end{center}
Note that
\[
\mu(W_j) = 2^j \ \ \text{ for all } j \in \Z.
\]
Since $f^i(W) = W_i$ for each $i \in \Z$, we have that, for each $j \in \Z$ and each $n \in \Z$, 
\[
\frac{\mu(f^j(W))}{\mu(f^{j+n}(W))} = \frac{\mu(W_j)}{\mu(W_{j+n})} = \frac{2^j}{2^{j+n}} = \frac{1}{2^n}\cdot
\]
Therefore,
\[
\limsup_{n \to \infty} \sup_{k \in \Z} \left(\frac{\mu(f^{k}(W))}{\mu(f^{k+n}(W))}\right)^{\frac{1}{n}} = \frac{1}{2} < 1,
\] 
showing that condition $(\cal HC)$ of Theorem~\ref{ShadTf} is satisfied.
However, we shall prove that the composition operator $T_f$ on $L^p(X)$ does not have the shadowing property. 
Indeed, suppose that $T_f$ has the shadowing property and let $\delta > 0$ be associated to $\epsilon = 1$ according to this property.
We consider the $\delta$-pseudotrajectory $(\varphi_j)_{j \in \Z}$ of $T_f$ given by
\begin{align*}
\varphi_j &= 0 \ \ \text{ for } j \geq 0;\\
\varphi_j &= T_f^{-1}(\varphi_{j+1}) + \delta\, {\rchi_{\{b_{-j}\}}} \ \ \text{ for } j \leq -1.
\end{align*}
Note that, for each $j \geq 1$, $\varphi_{-j}$ is always 0 except on $b_j$, where $\varphi_{-j}(b_j) = j \delta$. 
By hypothesis, there exists $\psi \in L^p(X)$ such that
\[
\|\varphi_j - (T_f)^j(\psi)\|_p < 1 \ \ \text{ for all } j \in \Z.
\]
In particular, for each $j \in \N$,  
\[ 
|\varphi_{-j}(b_j) - \psi(f^{-j}(b_j))|^p \mu(\{b_j\}) \leq \|\varphi_{-j} - (T_f)^{-j}(\psi)\|_p^p < 1.
\]
This implies that  
\[ 
{|j \delta - \psi(b_0)|}^{p} < 1 \ \ \text{ for all } j \in \N,
\]
which is impossible. 
Therefore, $T_f$ does not have the shadowing property, although condition ($\cal HC$) is satisfied.
\end{exmp}

Let us now show that the bounded distortion hypothesis is also essential for the validity of Theorem~\ref{thmMAINT_f}.

\begin{exmp}\label{BDH-Necessary}
Let $X$, $\B$, $f$, $W_j$ ($j \in \Z$) and $W$ be as in the previous example.
The measure $\mu$ on $\B$ is now defined by setting
\[
\mu(\{a_j\}) = 2^{-|j|} \ \ \text{ and } \ \ \mu(\{b_j\}) = 1 \ \ \text{ for all } j \in \Z.
\]
For each $q \in \N$, let
\[
A_q = \{a_{jq} : j \in \Z\}.
\]
Clearly, the function $\rchi_{A_q}$ belongs to $L^p(X)$ and is a nontrivial periodic point of $T_f$ with period $q$.
Thus, $T_f$ has nontrivial periodic points of every period, that is, property (viii) in Theorem~\ref{thmMAINT_f} holds.
In particular, properties (vii) and (vi) also hold.
However, we claim that all the other properties, namely (i)--(v), fail.
Indeed, if $\varphi \in L^p(X)$ is a periodic point of $T_f$ with a certain period $q \in \N$, then
\[
\varphi(b_j) = ((T_f)^q(\varphi))(b_j) = \varphi(b_{j+q}) \ \ \text{ for all } j \in \Z,
\]
showing that the sequence $(\varphi(b_j))_{j \in \Z}$ is periodic.
Since the function $|\varphi|^p$ is integrable on $X$ and $\mu(\{b_j\}) = 1$ for all $j \in \Z$, we conclude that
\[
\varphi(b_j) = 0 \ \ \text{ for all } j \in \Z.
\]
In particular, this implies that the set of all periodic points of $T_f$ is not dense in $L^p(X)$.
Hence, properties (ii), (iv) and (v) are false.
It is obvious that property (i) is false.
Finally, property (iii) is also false, because the operator $T_f$ is not even hypercyclic.
This follows from the fact that the restriction of $T_f$ to the closed $T_f$-invariant subspace
\[
N = \{\varphi \in L^p(X) : \varphi(a_j) = 0 \text{ for all } j \in \Z\}
\]
is an isometry.
\end{exmp}

Before presenting the next example, we would like to recall some connections between the various versions 
of the Frequent Hypercyclicity Criterion (FHC) and the properties analysed in this paper. 
They hold in general settings and, as we shall see later, they play an important role also in the specific context 
of dissipative composition operators.
So, let us recall the general version of the FHC provided by Bonilla and Grosse-Erdmann,
which is contained in \cite[Theorem 2.4]{BONILLA_GROSSE-ERDMANN_2007}.

\begin{thm}[{\bf{Frequent Hypercyclicity Criterion 1 (FHC1)}}] \label{FH1}
Let $T$ be an operator on a separable $F$-space $X$. 
If there exist a dense subset $X_0$ of $X$ and a sequence of maps $S_n : X_0 \to X$ such that, for each $x \in X_0$,
\begin{enumerate}
\item[\rm (i)]{$\sum_{n=1}^{\infty}T^n(x)$ converges unconditionally,}
\item[\rm (ii)]{$\sum_{n=1}^{\infty} S_n(x)$ converges unconditionally,}
\item[\rm (iii)]{$T^n S_n (x) = x$ and $T^m S_n(x) = S_{n-m}(x)$ if $ n > m$,}
\end{enumerate}
then the operator T is frequently hypercyclic.
\end{thm}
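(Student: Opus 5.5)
The plan is to construct a frequently hypercyclic vector directly, following the classical Bonilla--Grosse-Erdmann scheme. Fix a countable dense set $\{y_1,y_2,\dots\}\subset X_0$; this is possible since $X$ is separable and $X_0$ is dense. The candidate vector is
\[
x=\sum_{k\in\N}\sum_{n\in A_k} S_n(y_k),
\]
where $A_1,A_2,\dots\subset\N$ are pairwise disjoint sets, each of positive lower density. The sets are chosen sparse enough that, for distinct $n\in A_k$ and $m\in A_j$, we have $|n-m|\ge N_k+N_j$, and also $n\ge N_k$ for $n\in A_k$.

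The integers $N_k$ come from conditions (i) and (ii) via unconditional convergence. For each $k$, choose $N_k$ increasing so that
\[
\Big\|\sum_{n\in F}T^n(y_j)\Big\|<\frac{1}{k2^j} \quad\text{and}\quad \Big\|\sum_{n\in F}S_n(y_j)\Big\|<\frac{1}{k2^j}
\]
for every $j\le k$ and every finite $F\subset[N_k,\infty)$. The F-norm triangle inequality (and no homogeneity) is all that is used. The existence of disjoint $A_k$ with positive lower density and these separation properties is a standard combinatorial lemma, which I would cite or reprove as in \cite{BONILLA_GROSSE-ERDMANN_2007}. The usual choice is $A_k=\{(2i-1)2^{k-1}\ell_k : i\}$ with suitable large $\ell_k$, suitably adjusted.

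Convergence of $x$ follows from the tail estimates: the block $\sum_{n\in A_k}S_n(y_k)$ has F-norm at most $1/(k2^k)$ with the $j=k$ summability. I would first secure the unconditional convergence of the whole double series; since the $N_k$ grow, this is essentially summable.

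The main step is estimating $T^n(x)-y_k$ for $n\in A_k$. Write
\[
T^n(x)=y_k+\sum_{(j,m)\neq(k,n),\,m>n}S_{m-n}(y_j)+\sum_{m<n}T^{n-m}(y_j).
\]
Here we use (iii): $T^nS_n(y_k)=y_k$, $T^nS_m(y)=S_{m-n}(y)$ for $m>n$, and $T^nS_m(y)=T^{n-m}T^mS_m(y)=T^{n-m}y$ for $m<n$. The separation $|m-n|\ge N_j+N_k$ puts all shifted indices in $[N_{\max(j,k)},\infty)$. So both error sums are controlled by the tail bounds, split over $j\le k$ and $j>k$, with the $j>k$ ones using the $N_j$ bound. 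This gives $\|T^n(x)-y_k\|\lesssim \sum_j 2/(\max(j,k)2^j)$, which is small uniformly once $k$ is large.

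Hence for any nonempty open set $U$, choose $y_k$ with a ball around it inside $U$, with $k$ large. We may assume the dense sequence repeats each element infinitely often, so that $k$ can be taken large. Then $\{n:T^nx\in U\}\supset A_k$ has positive lower density. The main obstacle I expect is the uniformity of the error bound over $j>k$: it requires choosing the $N_j$ with respect to all $y_i$, $i\le j$, and carefully handling the reordering permitted by unconditional convergence in an F-space.
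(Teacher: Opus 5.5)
Your argument is correct and is exactly the Bonilla--Grosse-Erdmann construction to which the paper defers; the paper gives no proof of its own and only cites \cite[Theorem~2.4]{BONILLA_GROSSE-ERDMANN_2007}. The ingredients match: tail indices $N_k$ from unconditional convergence, separated disjoint sets $A_k$ of positive lower density, $x=\sum_k\sum_{n\in A_k}S_n(y_k)$, and the splitting of $T^n(x)-y_k$ via (iii), with your bookkeeping giving $\|T^n(x)-y_k\|\le 2/k$ for $n\in A_k$.

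One caveat: the explicit sets $\{(2i-1)2^{k-1}\ell_k\}$ you mention cannot give the required separation. An arithmetic progression $A_1$ has bounded gaps, so it cannot stay $N_1+N_k$ away from $A_k$ once $N_k$ is large. You should therefore genuinely invoke the separation lemma of \cite{BONILLA_GROSSE-ERDMANN_2007} (see also \cite[Chapter~9]{GrosseErdmannPeris2011}) rather than that formula.
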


The FHC1 is far stronger than any of the dynamical properties we have mentioned in this paper and, in particular, 
it implies the OSP  (in fact, the OSSP, as the invariant $K_m$ in the proof are all compact) as showed in \cite[Theorem 14]{Bartoll2016}.
The following version of the FHC is the most popular (see \cite[Theorem~9.9]{GrosseErdmannPeris2011}, for instance).

\begin{thm}[{\bf{Frequent Hypercyclicity Criterion 2 (FHC2)}}] \label{FH2} 
Let $T$ be an operator on a separable F-space $X$. 
If there exist a dense subset $X_0$ of $X$ and a map $S : X_0 \to X_0$ such that, for any $x \in X_0$,
\begin{enumerate}
\item[\rm (a)] {$\sum_{n=1}^{\infty} T^n(x)$ converges unconditionally,}
\item[\rm (b)] {$\sum_{n=1}^{\infty} S^n(x)$ converges unconditionally,}
\item[\rm (c)] {$TS(x) = x$,}
\end{enumerate}
then the operator $T$ is frequently hypercyclic.   
\end{thm}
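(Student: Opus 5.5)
The plan is to deduce Theorem~\ref{FH2} directly from the more general criterion FHC1 (Theorem~\ref{FH1}). We keep the same dense set $X_0$ and define the sequence of maps $S_n : X_0 \to X$ by $S_n = S^n$ for every $n \in \N$. This is well defined because $S$ maps $X_0$ into $X_0$, so all iterates $S^n(x)$ make sense for $x \in X_0$. It then suffices to check conditions (i)--(iii) of Theorem~\ref{FH1}.

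Conditions (i) and (ii) are immediate. Condition (i) of FHC1 is literally hypothesis (a) of FHC2. Condition (ii) of FHC1 reads $\sum_{n\ge1} S_n(x) = \sum_{n \ge 1} S^n(x)$ converging unconditionally, which is hypothesis (b).

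For condition (iii), we first show by induction that $T^k S^k(x) = x$ for all $x \in X_0$ and $k \in \N$. The case $k=1$ is hypothesis (c). For the inductive step, write
\[
T^{k+1}S^{k+1}(x) = T^k\bigl(T S(S^k x)\bigr) = T^k S^k(x) = x,
\]
where (c) is applied to the point $S^k x$, which lies in $X_0$. Then, for $n > m$ we have $S^n(x) = S^m\bigl(S^{n-m}x\bigr)$ with $S^{n-m}x \in X_0$, and hence
\[
T^m S_n(x) = T^m S^m\bigl(S^{n-m}x\bigr) = S^{n-m}(x) = S_{n-m}(x).
\]
The identity $T^n S_n(x) = x$ is the case $k=n$ of the induction.

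Theorem~\ref{FH1} therefore yields that $T$ is frequently hypercyclic. There is no real obstacle here. The only point needing care is the invariance $S(X_0)\subset X_0$, which is exactly what allows (c) to be applied at the points $S^{j}x$ in the iterated identities; this is the one place where FHC2's hypotheses are used beyond a direct translation of FHC1's conditions.
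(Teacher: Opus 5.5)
Your proposal is correct and follows the paper's route: the paper simply remarks that FHC2 follows from FHC1 (Theorem~\ref{FH1}) by specializing the sequence of maps, and your check of conditions (i)--(iii) fills in exactly that deduction, including the use of $S(X_0)\subset X_0$ to iterate $TS(x)=x$. Note that the paper says ``taking $S_n = S$'', which is evidently a slip; your choice $S_n = S^n$ is the one that actually makes (ii) and (iii) hold.
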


The FHC2 clearly follows from the FHC1 by taking $S_n = S$ for all $n \geq 1$. 
It is a strengthened version of the original criterion obtained by Bayart and Grivaux in \cite[Theorem 2.1]{BayartGrivaux2006}. 
It is well known that the FHC2 also implies Devaney chaos and topological mixing.
Moreover, it is linked to the following condition:

\begin{defn} (Summability Condition)  
A measurable system $(X,\B,\mu,f)$ (or simply the map $f$) is said to satisfy the {\em Summability Condition (SC)} if, 
for each $\eps > 0$ and each $B \in \B$ with $\mu(B) < \infty$, there exists a measurable set $B' \subseteq B$ such that
\[
\mu(B \backslash B') < \eps \ \text{ and } \ \sum_{n \in \Z} \mu(f^n(B')) < \infty. \tag{SC}
\]
\end{defn}

The next theorem, provided by Darji and Pires in \cite[Theorem 3.1]{Darjipires}, 
shows that the SC is the natural translation of the FHC2 to the composition operator framework.

\begin{thm} \label{DP}
Let $(X,\B,\mu,f)$ be a measurable system. 
For all $p \geq 1$, SC implies FHC2. Moreover, SC and FHC2 are equivalent for all $p \geq  2$.
\end{thm}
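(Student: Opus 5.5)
We prove the two assertions of Theorem~\ref{DP} separately. Throughout, $X_0$ denotes the linear span of the functions $\rchi_{B'}$, where $B'$ ranges over the measurable sets with $\sum_{n\in\Z}\mu(f^n(B'))<\infty$.

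\textbf{SC implies FHC2.} We take $T=T_f$ and $S=T_{f^{-1}}$ restricted to $X_0$. Note that $S$ maps $X_0$ into itself, because $\sum_n \mu(f^n(f^{-1}(B'))) = \sum_n\mu(f^n(B'))$. Next we check that $X_0$ is dense in $L^p(X)$. Simple functions supported on sets of finite measure are dense. Given such a set $B$ and $\eps>0$, SC yields $B'\subseteq B$ with $\mu(B\setminus B')<\eps$ and summable orbit, so $\|\rchi_B-\rchi_{B'}\|_p^p<\eps$. Condition (c) holds trivially, since $T_fT_{f^{-1}}\varphi=\varphi$.

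For (a) and (b), fix $\varphi=\rchi_{B'}$ and observe that $T^n\varphi=\rchi_{f^{-n}(B')}$. By injectivity of $f$, for any finite or infinite set $F\subseteq\N$ the function $\sum_{n\in F}\rchi_{f^{-n}(B')}$ is pointwise equal to $\sum_{n\in F}\rchi_{f^{-n}(B')}$ counted with multiplicity. The sets need not be disjoint, so unconditional convergence requires some care. We bound $\|\sum_{n\in F}\rchi_{f^{-n}(B')}\|_p$ as follows. Since $B'$ has summable orbit measures, a.e.\ point $x$ lies in only finitely many $f^{-n}(B')$; the counting function $N(x)=\sum_n\rchi_{f^{-n}(B')}(x)$ is in $L^1$. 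To get $L^p$ we want to shrink $B'$ further. This is the main obstacle for $p>1$. My first attempt is to replace $B'$ by $B''=\{x\in B': N_{B'}(x)\le M\}$, where $N_{B'}$ is the total orbit visit count $\sum_{n\in\Z}\rchi_{B'}(f^n x)$. This is invariant along orbits, so on $\bigcup f^n(B'')$ the count is at most $M$. Since $\int_{B'} N_{B'}\,d\mu\le \sum_n\mu(B'\cap f^n B')\le\sum_n \mu(f^nB')<\infty$ (up to the distortion constant $c$), $N_{B'}$ is finite a.e.\ on $B'$, and $\mu(B'\setminus B'')\to0$ as $M\to\infty$. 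With multiplicities bounded by $M$, $|\sum_{n\in F}\rchi_{f^{-n}B''}|^p\le M^{p-1}\sum_{n\in F}\rchi_{f^{-n}B''}$. The tails are therefore controlled by $M^{p-1}\sum_{n\in F}\mu(f^{-n}B'')$, which gives unconditional convergence in both directions, i.e.\ (a) and (b). Accordingly, $X_0$ should be defined via such bounded-multiplicity sets; density is unaffected.

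\textbf{FHC2 implies SC for $p\ge2$.} Assume $X_0$ and $S$ are given. Fix $B$ with $\mu(B)<\infty$ and $\eps>0$, and pick $\varphi\in X_0$ with $\|\varphi-\rchi_B\|_p$ small. Set $B'=B\cap\{|\varphi|>1/2\}$; Chebyshev's inequality gives $\mu(B\setminus B')$ small. The key step is to bound $\sum_{n\ge0}\mu(f^{-n}B')$, and for $p\ge2$ I would use the known fact that unconditionally convergent series in $L^p$ have cotype $p$. This gives $\sum_n\|T^n\varphi\|_p^p<\infty$, hence $\sum_n \int|\varphi\circ f^n|^p\ge 2^{-p}\sum_n\mu(f^{-n}B')$. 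The negative side is handled the same way via $S^n\varphi$: since $T^nS^n\varphi=\varphi$, we get $S^n\varphi=\varphi\circ f^{-n}$ on $f^n(X)$, which gives the bound on $\sum_n\mu(f^nB')$. The case $p\ge2$ is exactly where the cotype inequality is available, explaining the restriction.
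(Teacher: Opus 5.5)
The paper gives no proof of this theorem; it quotes it from Darji and Pires \cite[Theorem~3.1]{Darjipires}. So there is nothing in the paper to match, and your proposal has to stand on its own. It does: both directions are correct. What it adds over the bare citation is that it isolates the two mechanisms at work.

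In the forward direction you rightly notice that summable orbit measures alone do not give (a) and (b) when $p>1$, and this is a genuine issue, not a technicality. Take $X=\N\times\Z$, $f(i,j)=(i,j+1)$, $\mu(\{(i,j)\})=2^{-i}i^{-2}2^{-|j|}$ and $B'=\{(i,j):0\le j<2^i\}$. Then $\sum_{n\in\Z}\mu(f^n(B'))=3\sum_{i\ge1}i^{-2}<\infty$. Yet $\sum_{n\ge1}\chi_{f^{-n}(B')}=2^i$ on $\{(i,j):j\le-1\}$, a set of measure $2^{-i}i^{-2}$, so this series does not converge in $L^p$ for any $p>1$.

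Your truncation $B''=\{x\in B':N_{B'}(x)\le M\}$ by the orbit-invariant visit count is exactly the right repair. It gives $0\le\sum_{n\in F}\chi_{f^{\pm n}(B'')}\le M$, hence $\|\sum_{n\in F}\chi_{f^{\pm n}(B'')}\|_p^p\le M^{p-1}\sum_{n\in F}\mu(f^{\pm n}(B''))$. Meanwhile $\mu(B'\setminus B'')\to0$ because $N_{B'}$ is integrable, hence finite a.e.

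In the converse, Orlicz's theorem in $L^p$, which has cotype $p$ when $p\ge2$, is precisely what turns unconditional convergence of $\sum T^n\varphi$ and $\sum S^n\varphi$ into summability of $\mu(f^{\mp n}(B'))$. This is the natural explanation of the restriction $p\ge2$.

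Some details to tidy:

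\begin{enumerate}
\item Since $S\chi_B=\chi_B\circ f^{-1}=\chi_{f(B)}$, the invariance check concerns $f(B')$, not $f^{-1}(B')$. It holds either way, and $f(B'')$ inherits the multiplicity bound, so $S(X_0)\subseteq X_0$. Define $X_0$ from the start as the span of the $\chi_B$ with $\sum_n\mu(f^n(B))<\infty$ and $\sup N_B<\infty$.
\item In this paper a measurable system only requires $f$ to be injective. So instead of $T_{f^{-1}}$, set $S\chi_B=\chi_{f(B)}$ on $X_0$. Then $TS\chi_B=\chi_{f^{-1}(f(B))}=\chi_B$ by injectivity, and the visit count, taken over backward iterates where they exist, is still constant on orbits.
\item No distortion constant enters: $\int_{B'}N_{B'}\,d\mu=\sum_{n\in\Z}\mu(B'\cap f^{-n}(B'))\le\sum_{n\in\Z}\mu(f^n(B'))$.
\item The sentence saying the sum is ``pointwise equal to itself counted with multiplicity'' is vacuous and should be deleted.
\item In the converse, state the cotype fact precisely: $L^p$ has cotype $p$ for $p\ge2$, so $\sum\|x_n\|_p^p<\infty$ for every unconditionally convergent $\sum x_n$.
\item Also in the converse, note that $|S^n\varphi|>1/2$ a.e.\ on $f^n(B')$ because $f^n$ maps null sets to null sets, which follows from nonsingularity and injectivity.
\end{enumerate}
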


They also established the following result \cite[Theorem 3.3]{Darjipires}.

\begin{thm}\label{DP2}
Let $(X,\B,\mu,f)$ be a measurable system.
If $\mu(X)$ is finite, then $f$ satisfies the SC if and only if $f$ is dissipative.
\end{thm}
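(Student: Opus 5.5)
The plan is to prove the two implications separately. The converse direction is an explicit truncation argument; the forward direction uses a Borel--Cantelli estimate together with the Hopf decomposition of a nonsingular map. Throughout, $\mu(X)<\infty$, so every $B\in\B$ has finite measure and is an admissible set in the definition of SC.

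\textbf{Dissipative $\Rightarrow$ SC.} Suppose $X=\dot\bigcup_{k\in\Z} f^k(W)$, where this may hold only up to a null set if dissipativity is meant in the Hopf sense. Since $\sum_{k\in\Z}\mu(f^k(W))=\mu(X)<\infty$, for a given $\eps>0$ we choose $N$ with $\mu\bigl(\bigcup_{|k|>N} f^k(W)\bigr)<\eps$. For any $B\in\B$ we set
\[
B'=B\cap\bigcup_{|k|\le N} f^k(W),
\]
so that $\mu(B\setminus B')<\eps$. Because $f$ is injective, $f^n(B')\subseteq\bigcup_{|k|\le N} f^{n+k}(W)$, and therefore
\[
\sum_{n\in\Z}\mu(f^n(B'))\le\sum_{|k|\le N}\sum_{n\in\Z}\mu(f^{n+k}(W))=(2N+1)\,\mu(X)<\infty .
\]
This is exactly (SC), so this direction is routine.

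\textbf{SC $\Rightarrow$ dissipative.} Let $X=C\,\dot\cup\, D$ be the Hopf decomposition of the nonsingular invertible map $f$. Here $D$ is the dissipative part, a union of iterates of a wandering set, and $C$ is the conservative part. On $C$ the Halmos recurrence theorem holds: for every $A\subseteq C$ of positive measure, almost every point of $A$ returns to $A$ infinitely often under forward iteration. Given $\eps>0$, we apply SC with $B=X$ to get $B'$ with $\mu(X\setminus B')<\eps$ and $\sum_n\mu(f^n(B'))<\infty$.

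The sets $E_n=B'\cap f^{-n}(B')$ satisfy $\sum_{n\ge1}\mu(E_n)\le\sum_n\mu(f^{-n}(B'))<\infty$. By Borel--Cantelli, almost every $x\in B'$ lies in only finitely many $E_n$, so almost every point of $B'$ returns to $B'$ only finitely often. If $\mu(B'\cap C)>0$, applying recurrence to $A=B'\cap C$ gives infinitely many returns to $A\subseteq B'$ for almost every point of $A$, a contradiction. Hence $\mu(C)\le\mu(X\setminus B')<\eps$. As $\eps$ is arbitrary, $\mu(C)=0$, and $f$ is dissipative: $X=\dot\bigcup_k f^k(W)$ modulo a null set, with $\mu(W)\le\mu(X)<\infty$.

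\textbf{Main obstacle.} The delicate point is matching the conclusion to the paper's definition of a dissipative system, which asks for an exact partition $X=\dot\bigcup_{k} f^k(W)$ with $0<\mu(W)<\infty$. The Hopf decomposition only gives this modulo a null set, and it produces a wandering set whose orbit covers the space but need not immediately have $\mu(W)>0$ in the required normalized form. I would handle this by absorbing the null conservative remainder, which is $f$-invariant, into the measure-zero ambiguity allowed in $L^p$. Alternatively, I would read ``dissipative'' in the theorem in the Hopf sense, and note that for finite $\mu$ the finiteness of $\mu(W)$ is automatic.

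If one prefers to avoid invoking Hopf and Halmos explicitly, one can instead build $W$ directly. Take an exhaustion by the sets $B'_{1/m}$, and in each of them take the set of points whose forward orbit never returns to it; the Borel--Cantelli estimate above shows this set is large. Disjointifying these sets then produces the required $W$. I expect this construction to be the step that needs the most care.
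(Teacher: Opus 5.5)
The paper gives no proof of this statement; it is quoted from \cite[Theorem~3.3]{Darjipires}, so there is no in-paper argument to compare with. Judged on its own, your proof is correct.

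In the direction ``dissipative $\Rightarrow$ SC'', the inclusion $f^n(f^k(W))\subseteq f^{n+k}(W)$ holds for all $n,k\in\Z$ using only injectivity, with negative powers read as preimages. In the other direction, Borel--Cantelli applied to $E_n=B'\cap f^{-n}(B')$, together with Halmos' recurrence theorem on the conservative part $C$, correctly gives $\mu(C)\le\mu(X\setminus B')<\eps$, hence $\mu(C)=0$.

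Concerning your ``main obstacle'', the reading modulo null sets is forced, not merely convenient. Your first proposed fix, absorbing the null remainder, cannot work in general. Let $X=\Z\cup\{\infty\}$, $\B=\mathcal{P}(X)$, $f(n)=n+1$, $f(\infty)=\infty$, $\mu(\{n\})=2^{-|n|}$ and $\mu(\{\infty\})=0$.
\begin{itemize}
\item This is an invertible measurable system with $\mu(X)=3$, and ($\star$) holds with $c=2$.
\item SC holds exactly as in Example~\ref{infty}, by taking finite $B'\subseteq B\cap\Z$.
\item No $W$ gives an exact partition $X=\dot\bigcup_{k\in\Z}f^k(W)$: if $\infty\in f^k(W)$, then $\infty=f(\infty)\in f^{k+1}(W)$.
\end{itemize}
So the theorem must be read with ``dissipative'' meaning $X=\dot\bigcup_k f^k(W)$ up to a $\mu$-null set, which is harmless for $L^p(X)$; your second reading is the right one. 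Under it, the normalization $0<\mu(W)<\infty$ is automatic. Finiteness follows from $\mu(W)\le\mu(X)<\infty$. If $\mu(W)=0$, nonsingularity gives $\mu(f^k(W))=0$ for all $k$ (for $k>0$ use $f^{-1}(f(Z))=Z$), so $\mu(X)=0$, which is excluded.

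Two smaller points.

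First, you apply the two-sided Hopf decomposition to an invertible $f$, but a measurable system only has $f$ injective. This is easily repaired.
\begin{itemize}
\item Since $f^{-1}(X\setminus f(X))=\emptyset$, nonsingularity gives $\mu(X\setminus f(X))=0$.
\item Since $f$ maps null sets to null sets, $Y=\bigcap_{n\ge0}f^n(X)$ has full measure and satisfies $f(Y)=f^{-1}(Y)=Y$.
\item $f|_Y$ is then a bimeasurable nonsingular bijection, and your argument runs on $Y$.
\end{itemize}

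Second, in your Hopf-free alternative, the non-return set $W_0=\{x\in B' : f^n(x)\notin B' \text{ for all } n\ge1\}$ need not be large. What Borel--Cantelli actually gives is that almost every $x\in B'$ has a last visit time $m\ge0$ to $B'$. Then $f^m(x)\in W_0$, so $B'\subseteq\bigcup_{m\ge0}f^{-m}(W_0)$ up to a null set. Since $W_0$ is wandering, disjointifying the invariant orbits obtained for an exhaustion $B'_{1/j}$ yields a single wandering set whose orbit has full measure. This gives a self-contained proof that avoids Hopf and Halmos.
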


Now, back to composition operators, we have the following remark.

\begin{rmk}\label{rmkOSSP}
Let $(X,\B,\mu,f)$ be an invertible dissipative measurable system.
Suppose that $\mu(X)$ is finite.
By Theorem~\ref{DP2}, $f$ satisfies the SC.
Hence, by Theorem~\ref{DP}, $T_f$ satisfies the FHC2.
Therefore, properties (ii), (iii), (iv), (v) and (vi) in Theorem~\ref{thmMAINT_f} hold.  
Property (viii) (hence, property (vii)) in this theorem also hold, 
since the proof of (i) $\Rightarrow$ (viii) in this theorem does not use the bounded distortion hypothesis.
The conclusion is that:
{\em Property (i) implies all the other properties in Theorem~\ref{thmMAINT_f} even if we remove the bounded distortion hypothesis.}
On the other hand, we saw in Example~\ref{BDH-Necessary} that, without the bounded distortion hypothesis, 
it is false in general that properties (vi), (vii) and (viii) imply property (i).
In the next example, we will see that properties (ii), (iii), (iv) and (v) do not imply property (i), in general, if we remove the bounded distortion hypothesis.
\end{rmk}

\begin{exmp} \label{infty}
We will now obtain an invertible dissipative measurable system $(X,\B,\mu,f)$, with a purely atomic measure $\mu$, 
such that $\mu(X) = \infty$ and $f$ satisfies the SC.
As a consequence, the invertible dissipative composition operator $T_f$ on $L^p(X)$ satisfies the FHC2 and, therefore, it has the OSSP. 
In particular, $T_f$ is frequently hypercyclic, Devaney chaotic and topologically mixing. 
 
\smallskip
To this aim, let $X = \Z \times \Z$ and $\B = {\cal P}(X)$. 
The measure $\mu$ on $\B$ is defined by putting
\[
\mu(\{(i,j)\}) = \frac{1}{2^{|i+j|}}\cdot
\] 
The transformation $f : X \to X$ is given by 
\[
f(i,j) = (i,j+1).
\] 
Since
\[
\mu(f^{-1}(\{(i,j)\})) = \frac{1}{2^{|i+j-1|}} \ \ \text{ and } \ \ \mu(f(\{(i,j)\})) = \frac{1}{2^{|i+j+1|}}\,,
\]
we have that
\[
\frac{\mu(f^{-1}(\{(i,j)\}))}{\mu(\{(i,j)\})} = \frac{2^{|i+j|}}{2^{|i+j-1|}} \leq 2 \ \ \text{ and } \ \
\frac{\mu(f(\{(i,j)\}))}{\mu(\{(i,j)\})} = \frac{2^{|i+j|}}{2^{|i+j+1|}} \leq 2.
\]
This shows that both $f$ and $f^{-1}$ satisfy condition ($\star$) with $c = 2$.
Hence, $(X,\B,\mu,f)$ is an invertible measurable system, which is dissipative, since
\[
X = \dot\bigcup_{k \in \Z} W_k = \dot\bigcup_{k \in \Z} f^k(W),
\] 
where $W_k = \Z \times \{k\}$, for each $k \in \Z$, and $W = W_0$. Since
\[
\mu(W_k) = \sum_{i \in \Z} \mu(\{(i,k)\}) = \sum_{i \in \Z} \frac{1}{2^{|i+k|}} = 3 \ \ \text{ for all } k \in \Z,
\]
we have that $\mu(X) = \infty$. It remains to show that $f$ satisfies the SC.
For this purpose, take $\eps > 0$ and $B \in \B$ with $\mu(B) < \infty$. Since
\[
\mu(B) = \sum_{(i,j) \in B} \mu(\{(i,j)\}) < \infty,
\]
there is a finite subset $B'$ of $B$ such that 
\[
\mu(B \backslash B') = \sum_{(i,j) \in B \backslash B'} \mu(\{(i,j)\}) < \eps.
\]
Moreover,
\begin{align*}
\sum_{n \in \Z} \mu(f^n(B')) &= \sum_{n \in \Z} \sum_{(i,j) \in B'} \mu(f^n(\{(i,j)\}))\\ 
  &= \sum_{(i,j) \in B'} \sum_{n \in \Z} \frac{1}{2^{|i+j+n|}} = 3\, \card(B') < \infty,
\end{align*}
where $\card(B')$ denotes the cardinality of the set $B'$.
Thus, $f$ satisfies the SC.
\end{exmp}

The next is an example of a dissipative system, without bounded distortion, that satisfies the OSSP. 

\begin{exmp}
We will now obtain an invertible dissipative measurable system $(X,\B,\mu,f)$ such that 
$\mu(X) < \infty$ but the bounded distortion does not hold.
In view of Remark~\ref{rmkOSSP}, the invertible dissipative composition operator $T_f$ on $L^p(X)$ has the OSSP,
although the bounded distortion fails.

\smallskip
Let $X$ be the product space $[0,1) \times \Z$, where $[0,1)$ is endowed with its usual topology
and $\Z$ is endowed with its discrete topology, and let $\B$ be the Borel $\sigma$-algebra on $X$.
Define  
\[ 
W_k = [0, 1) \times \{k\}, \ \ k \in \Z.
\]  
The transformation $f : X \to X$ is given by 
\[
f(x,k)= (x, k+1).
\]
Note that $f$ is a homeomorphism and, for each $ k \in \Z$, 
\[
f^k(W_0) = [0,1) \times \{0+k\} = W_k.
\] 
Hence, taking $W = W_0$, we have that 
\[
X = \dot\bigcup_{k \in \Z} W_k = \dot\bigcup_{k \in \Z} f^k(W),
\]
so that we have dissipativity.
The measure $\mu$ on $\B$ is defined in the following way: For each Borel set $E \subseteq X$, let
\[
\mu(E) = \sum_{k \in \Z} \int_{E \cap W_k} \rho(x,k)\, dx,
\]
where the density $\rho$ is given, for each $x \in [0, 1)$, by  
\[
\rho(x,k) = 
\begin{cases} 
\frac{1}{2^k} [(k+1)x +1] & \text{if } k \geq 0 \\ 
\frac{1}{2^{\vert k \vert}} & \text{if } k <0. 
\end{cases}
\]
Hence, for $k \geq 0$, 
\[
\mu(W_k) =  \frac{1}{2^k} \int_{0}^{1} [(k+1)x +1] dx = \frac{1}{2^k} \Big(\frac{k+1}{2} + 1\Big) = \frac{k+3}{2^{k+1}}\,, 
\]
and, for $k < 0$,
\[
\mu(W_k) = \frac{1}{2^{\vert k \vert}} \int_{0}^{1} 1\, dx = \frac{1}{2^{\vert k \vert}}\cdot
\]
The measure of $X$ is finite as
\begin{align*}
\mu(X) &=  \sum_{k \in \Z} \mu(W_k) = \sum _{k=-\infty}^{-1} \frac{1}{ 2^{\vert k \vert  }} + \sum _{k=0}^{\infty} \frac{k+3}{2^{k+1}}\\
&= 1 + 4 \sum_{k=0}^{\infty} \frac{k+3}{2^{k +3}} = 1 + 4 \sum_{s=3}^{\infty} \frac{s}{2^{s}} = 1 + 4 \cdot (1) = 5 <\infty.
\end{align*}
Note also that $f$ is a non-singular transformation, since 
\[
f^{-1}(E) \cap W_k =\{(x,k) : (x, k+1) \in E\},
\] 
meaning that $f^{-1}(E) \cap W_k$ is a copy of $E \cap W_{k+1}$, implying
\[
\mu(E) = 0 \ \Longleftrightarrow \ \mu(f^{-1}(E))=0.
\] 
Moreover, since the Radon-Nikodym derivative $h(x, k) = \frac{\rho(x, k-1)}{\rho(x, k)}$ is the ratio of the densities 
between two successive copies, an easy computation shows that, for each $(x,k) \in X$,   
\[
\frac{1}{4} \leq \frac{\rho(x, k-1)}{\rho(x, k)} \leq 4.
\]
This implies that both $f$ and $f^{-1}$ satisfy condition ($\star$).
Hence, $(X,\B,\mu,f)$ is an invertible dissipative measurable system.
It remains to show that the bounded distortion does not hold. 
For the bounded distortion to be satisfied, there must exist a constant $C > 0$ such that, 
for each measurable subset $B$ of $W = W_0$ and each $k  \in \Z$,  
\[
\frac{1}{C}\, \mu(f^{k}(W_{0})) \mu(B) \leq \mu(f^{k}(B)) \mu(W_{0}) \leq C\, \mu (f^{k}(W_{0})) \mu(B).
\]
We show that this inequality cannot be satisfied for $k \ge 0$ by choosing the subset $B$ appropriately. 
Let us take the subset $B_\eps = [0,\eps) \times \{0\}$ with an arbitrarily small $0 < \eps <1$, so that $B_\eps \subset W$, and note that
\[
\mu(W_0) = \frac{0+3}{2^1} = \frac{3}{2}
\]
and 
\[
\mu(B_{\eps}) = \int_0^{\eps} (x + 1)\, dx = \frac{\eps^2}{2} + \epsilon.
\] 
Moreover,
\[
\mu(f^k(W_0)) = \mu(W_k) = \frac{k+3}{2^{k+1}}
\]
and
\[
\mu (f^k(B_{\eps})) = \frac{1}{2^k} \int_{0}^{\eps} \left[(k+1)x+1\right]dx = \frac{1}{2^k} \left(\frac{k+1}{2}\,\eps^{2}+\eps\right).
\]
Substituting these values into the ratios, yields:  
\[
\frac{\mu(f^k(W_0))}{\mu(W_0)} \cdot \frac{\mu(B_{\eps})}{\mu(f^{k}(B_{\eps}))} 
= \frac{\frac{k+3}{2^{k+1}}}{\frac{3}{2}} \cdot \frac{\frac{\eps^2}{2}+\eps}{\frac{1}{2^k}\left(\frac{k+1}{2}\,\eps^2 + \eps\right)}
= \frac{k+3}{3} \cdot \frac{\frac{\eps}{2}+1}{\frac{k+1}{2}\,\eps +1}\cdot
\]
At this point, note that
\[
\lim_{\eps \to 0^+} \left(\frac{k+3}{3} \cdot \frac{\frac{\eps}{2}+1}{\frac{k+1}{2}\,\eps +1}\right)
= \frac{k+3}{3} \cdot \frac{1}{1} = \frac{k+3}{3}\cdot
\]
Since the choice of the constant $C$ in the definition of bounded distortion must hold uniformly for every $k \in \Z$, 
we observe that, by taking arbitrarily large values of $k$:
\[
\lim _{k \to \infty } \frac{k+3}{3} = \infty.
\]
Therefore, no finite constant $C$ exists that can upper bound this ratio and, then, the bounded distortion is not satisfied. 
\end{exmp}
 
As the next example shows, a non topologically mixing  map $f$ can generate a composition operator $T_{f}$ which is topologically mixing.  

\begin{exmp}
Recall that any irrational rotation of the circle is topologically transitive but not topologically mixing 
(see, for instance, \cite[Example 1.43]{GrosseErdmannPeris2011}).  
Consider $[0, 1)$ endowed with its usual topology. 
Fix any $\alpha \in [0, 1) \backslash \Q$ and define $g : [0, 1) \to [0, 1)$ as the irrational rotation:
\[
g(x) = x + \alpha \ (\text{mod } 1).
\]
Then, $g$ is invertible. The standard Lebesgue measure $m$ on $[0,1)$ is non-atomic, and $m([0, 1)) = 1$. 
Moreover, $g$ is an isometry, therefore it preserves the measure: 
\[
m(g(E)) = m(E),
\] 
for each measurable set $E \subseteq [0,1)$. In particular, $g$ is non-singular.
Let
\[ 
W = W_0 = [0, 1) \times \{0\} \ \text{ and } \ W_{k}= [0, 1) \times \{k\}, \ k \in \Z.
\]  
Define the space $X$ as the disjoint union: 
\[
X = \dot \bigcup_{k \in \Z} W_{k}.
\]
In other words, $X = [0,1) \times \Z$. 
We consider $X$ endowed with the product topology, where $\Z$ is given the discrete topology.
We now define the measure $\mu$ on $X$ by putting
\[
\mu(E \times \{k\}) = \frac{1}{2^{|k|+1}}\, m(E),
\]
for each measurable set $E \subseteq [0, 1)$ and each $k \in \Z$. 
Hence, the measure of $X$ is finite, since
\[
\mu(X) =  \frac{1}{2} + 2 \sum _{k=1}^{\infty} \frac{1}{2^{k+1}} = \frac{1}{2} + 1 =  \frac{3}{2}<\infty.
\]
Define $f : X \to X$ in the following way: 
\[
f(x,k)= (g(x), k+1) =  (x+\alpha \left( \text{mod } 1\right), k+1).
\]
Then, $f$ is invertible and non-singular. Moreover, $f$ is not topologically mixing as $g$ is not. 
Clearly, for each $ k \in \Z$, 
\[
f^{k}(W) = f^k(W_0) = [0,1) \times \{0+k\} =W_k,
\] 
meaning that $T_f$ is dissipative, since 
\[
X = \dot \bigcup_{k \in \Z} W_{k} = \dot \bigcup_{k \in \Z} f^{k}(W).
\]
Moreover, the bounded distortion property is satisfied: for any measurable subset $B \times \{0\}$ of $W$ and any $k \in \Z$,
\begin{align*}
\mu(f^k(B \times \{0\})) &= \mu(g^k(B) \times \{k\}) = \frac{1}{2^{\vert k \vert +1}}\, m(g^k(B))\\
&= \frac{1}{2^{\vert k \vert +1}}\, m(B) = \frac{1}{2^{\vert k \vert +1}}\, 2\, \mu(B \times \{0\}),
\end{align*}
and so
\[
\frac{ \mu(f^k(B \times \{0\}))} {\mu(B \times \{0\}) } = \frac{ 1 }{ 2^{\vert k \vert}}\cdot
\]
This number does not depend on the subset $B \times \{0\}$ of $W$ we take. 
It is the same number if we take the full $W$. So, we have bounded distortion.
Hence, it follows from Theorem~\ref{thmMAINT_f}  that $T_f$ satisfies the OSSP and, 
therefore, by \cite[Proposition~11]{Bartoll2016}, $T_{f}$ is topologically mixing.
\end{exmp}

\section*{Acknowledgement}

The first author was partially supported by CNPq (Brazil) -- Project {\#}308238/2021-4, by CAPES (Brazil) -- Finance Code 001,
and by Project PID2022-139449NB-I00, funded by MCIN/AEI/10.13039/501100011033/FEDER, UE. \\
The second author and the third author were partially supported by the “Gruppo Nazionale per l’Analisi Matematica, la Probabilità e le loro Applicazioni dell’Istituto Nazionale di Alta Matematica F. Severi”,  and 
their work was also partially accomplished within the UMI Group TAA “Approximation Theory and Applications”.\\
The second  author was also partially funded by “MEdical Device Maintenance with Artificial Intelligence - MEDMAI” (CUP B29J24000040005) from Ministero delle Imprese e del Made in Italy (MIMIT) "Ricerca collaborativa" of 
Programma Nazionale Ricerca, Innovazione e Competitività per la transizione verde e digitale 2021-2027.


\bibliographystyle{siam}
\bibliography{biblio}

@article {DAnielloMaiuriello2023,
    AUTHOR = {D'Aniello, Emma and Maiuriello, Martina},
    TITLE = {On the spectrum of weighted shifts},
    JOURNAL = {Rev. R. Acad. Cienc. Exactas F\'is. Nat. Ser. A Mat. RACSAM},
    FJOURNAL = {Revista de la Real Academia de Ciencias Exactas, F\'isicas y
              Naturales. Serie A. Matematicas. RACSAM},
    VOLUME = {117},
      YEAR = {2023},
    NUMBER = {1},
   MRNUMBER = {4493762},
   Note = {Paper No. 4, 19 pp.},
   ISSN = {1578-7303,1579-1505},
   MRCLASS = {47B37 (47A05 47A10 47A16)},
  
MRREVIEWER = {Tudor\ B\^inzar},
       DOI = {10.1007/s13398-022-01328-z},
       URL = {https://doi.org/10.1007/s13398-022-01328-z},
}

@article{Bartoll2014,
	Author = {Bartoll, S. and Mart\'inez-Gim\'enez, F. and Murillo-Arcila, M. and Peris, A.},
	Journal = {Descriptive Topology and Functional Analysis. Springer Proceedings in Mathematics and Statistics},
	Title = {Cantor Sets, {B}ernoulli Shifts and Linear Dynamics},
	Volume = {80},
	Year = {2014}}

@book{Conway,
	Author = {Conway, John},
	Publisher = {American Mathematical Society},
	Series = {Mathematical Surveys and Monographs},
	Title = {The Theory of Subnormal Operators},
	Volume = {36},
	Year = {1991}}

@article {BayartGrivaux2006,
    AUTHOR = {Bayart, Fr\'ed\'eric and Grivaux, Sophie},
     TITLE = {Frequently hypercyclic operators},
   JOURNAL = {Trans. Amer. Math. Soc.},
  FJOURNAL = {Transactions of the American Mathematical Society},
    VOLUME = {358},
      YEAR = {2006},
    NUMBER = {11},
     PAGES = {5083--5117},
      ISSN = {0002-9947,1088-6850},
   MRCLASS = {47A16 (37A05 37B05 47A35)},
  MRNUMBER = {2231886},
MRREVIEWER = {Vivien\ G.\ Miller},
       DOI = {10.1090/S0002-9947-06-04019-0},
       URL = {https://doi.org/10.1090/S0002-9947-06-04019-0},
}

@article{BayartRuzsa,
	Author = {Bayart, Fr\'{e}d\'{e}ric and Ruzsa, Imre Z.},
	Doi = {10.1017/etds.2013.77},
	Fjournal = {Ergodic Theory and Dynamical Systems},
	Issn = {0143-3857},
	Journal = {Ergodic Theory Dynam. Systems},
	Mrclass = {47A16 (37B50)},
	Mrnumber = {3334899},
	Mrreviewer = {Sophie Grivaux},
	Number = {3},
	Pages = {691--709},
	Title = {Difference sets and frequently hypercyclic weighted shifts},
	Url = {https://doi.org/10.1017/etds.2013.77},
	Volume = {35},
	Year = {2015}}

@inproceedings{Bowen1970,
  author    = {Rufus Bowen},
  title     = {\text{Topological Entropy and Axiom A}},
  booktitle = {Global Analysis},
  series    = {Proceedings of Symposia in Pure Mathematics},
  volume    = {14},
  address   = {Providence, RI},
  publisher = {American Mathematical Society},
  year      = {1970},
  pages     = {23--41},
  note      = {Proceedings of the Symposium in Pure Mathematics, Berkeley, CA, 1968}
}

@article {Bowen1971,
    AUTHOR = {Bowen, Rufus},
     TITLE = {Periodic points and measures for {A}xiom {$A$}
              diffeomorphisms},
   JOURNAL = {Trans. Amer. Math. Soc.},
  FJOURNAL = {Transactions of the American Mathematical Society},
    VOLUME = {154},
      YEAR = {1971},
     PAGES = {377--397},
      ISSN = {0002-9947,1088-6850},
   MRCLASS = {57.20},
  MRNUMBER = {282372},
MRREVIEWER = {U.\ D'Ambrosio},
       DOI = {10.2307/1995452},
       URL = {https://doi.org/10.2307/1995452},
}

@article {Bartoll2016,
    AUTHOR = {Bartoll, Salud and Mart\'inez-Gim\'enez, F\'elix and Peris,
              Alfredo},
     TITLE = {Operators with the specification property},
   JOURNAL = {J. Math. Anal. Appl.},
  FJOURNAL = {Journal of Mathematical Analysis and Applications},
    VOLUME = {436},
      YEAR = {2016},
    NUMBER = {1},
     PAGES = {478--488},
      ISSN = {0022-247X,1096-0813},
   MRCLASS = {47A16 (47B37)},
  MRNUMBER = {3440106},
MRREVIEWER = {Janko\ Bra\v ci\v c},
       DOI = {10.1016/j.jmaa.2015.12.004},
       URL = {https://doi.org/10.1016/j.jmaa.2015.12.004},
}

@article {Bartoll2012,
    AUTHOR = {Bartoll, Salud and Mart\'inez-Gim\'enez, F\'elix and Peris,
              Alfredo},
     TITLE = {The specification property for backward shifts},
   JOURNAL = {J. Difference Equ. Appl.},
  FJOURNAL = {Journal of Difference Equations and Applications},
    VOLUME = {18},
      YEAR = {2012},
    NUMBER = {4},
     PAGES = {599--605},
      ISSN = {1023-6198,1563-5120},
   MRCLASS = {47A16 (47B37)},
  MRNUMBER = {2905285},
       DOI = {10.1080/10236198.2011.586636},
       URL = {https://doi.org/10.1080/10236198.2011.586636},
}

@article {Brian2017,
    AUTHOR = {Brian, William R. and Kelly, James P. and Tennant, Tim},
     TITLE = {The specification property and infinite entropy for certain
              classes of linear operators},
   JOURNAL = {J. Math. Anal. Appl.},
  FJOURNAL = {Journal of Mathematical Analysis and Applications},
    VOLUME = {453},
      YEAR = {2017},
    NUMBER = {2},
     PAGES = {917--927},
      ISSN = {0022-247X,1096-0813},
   MRCLASS = {47A06},
  MRNUMBER = {3648265},
MRREVIEWER = {Koratti\ C.\ Sivakumar},
       DOI = {10.1016/j.jmaa.2017.04.041},
       URL = {https://doi.org/10.1016/j.jmaa.2017.04.041},
}

@article{DAnielloDarjiMaiuriello,
	Author = {D'Aniello, Emma and Darji, Udayan B. and Maiuriello, Martina},
	Doi = {10.1016/j.jde.2021.06.038},
	Fjournal = {Journal of Differential Equations},
	Issn = {0022-0396},
	Journal = {J. Differential Equations},
	Mrclass = {37D05 (37C50 47A16 47B33)},
	Mrnumber = {4284483},
	Pages = {68--94},
	Title = {Generalized hyperbolicity and shadowing in {$L^p$} spaces},
	Url = {https://doi.org/10.1016/j.jde.2021.06.038},
	Volume = {298},
	Year = {2021}}

@article{DAnielloDarjiMaiuriello2,
	Author = {D'Aniello, Emma and Darji, Udayan B. and Maiuriello, Martina},
	Doi = {10.1016/j.jmaa.2022.126393},
	Fjournal = {Journal of Mathematical Analysis and Applications},
	Issn = {0022-247X},
	Journal = {J. Math. Anal. Appl.},
	Mrclass = {47B33 (47A16)},
	Mrnumber = {4436502},
	Note = {Paper No. 126393, 13 pp.},
	Number = {1},
	Title = {Shift-like operators on {$L^p(X)$}},
	Url = {https://doi.org/10.1016/j.jmaa.2022.126393},
	Volume = {515},
	Year = {2022}}

@article{Buzzi,
	Author = {Buzzi, J\'er\^ome},
	Doi = {10.1090/S0002-9947-97-01873-4},
	Fjournal = {Transactions of the American Mathematical Society},
	Journal = {Trans. Amer. Math. Soc.},
	Number = {7},
	Pages = {2737--2754},
	Title = {Specification on the interval},
	Volume = {349},
	Year = {1997}}

@article{Darjipires,
	Author = {Darji, Udayan B. and Pires, Benito},
	Doi = {10.1017/S0013091521000286},
	Fjournal = {Proceedings of the Edinburgh Mathematical Society. Series II},
	Issn = {0013-0915},
	Journal = {Proc. Edinb. Math. Soc. (2)},
	Mrclass = {47A16 (37D45 47B33)},
	Mrnumber = {4330274},
	Number = {3},
	Pages = {513--531},
	Title = {Chaos and frequent hypercyclicity for composition operators},
	Url = {https://doi.org/10.1017/S0013091521000286},
	Volume = {64},
	Year = {2021}}

@article {Maiuriello2023,
    AUTHOR = {Maiuriello, Martina},
     TITLE = {On the existence of infinite-dimensional closed subspaces of
              frequently hypercyclic vectors for {$T_f$}},
   JOURNAL = {Real Anal. Exchange},
  FJOURNAL = {Real Analysis Exchange},
    VOLUME = {48},
      YEAR = {2023},
    NUMBER = {2},
     PAGES = {409--424},
      ISSN = {0147-1937,1930-1219},
   MRCLASS = {47A16 (47B33 91B55)},
  MRNUMBER = {4668957},
       DOI = {10.14321/realanalexch.48.2.1676962925},
       URL = {https://doi.org/10.14321/realanalexch.48.2.1676962925},
}

@article{Marcin2025,
	Author = {Deka, Konrad and Kwietniak, Dominik and Peng, Bo and Sabok, Marcin},
	Journal = {arXiv:2501.02723},
	Title = {Bowen's Problem 32 and the conjugacy problem for systems with specification},
	Year = {2025}}

@book{Ruette,
	Author = {Ruette, Sylvie},
	Doi = {10.1090/ulect/067},
	Isbn = {978-1-4704-2956-0},
	Mrclass = {37-02 (37D45 37E05)},
	Mrnumber = {3616574},
	Mrreviewer = {V\'ictor\ Jim\'enez L\'opez},
	Pages = {xii+215},
	Publisher = {American Mathematical Society, Providence, RI},
	Series = {University Lecture Series},
	Title = {Chaos on the {I}nterval},
	Url = {https://doi.org/10.1090/ulect/067},
	Volume = {67},
	Year = {2017}}

@article {Sigmund1974,
    AUTHOR = {Sigmund, Karl},
     TITLE = {On dynamical systems with the specification property},
   JOURNAL = {Trans. Amer. Math. Soc.},
  FJOURNAL = {Transactions of the American Mathematical Society},
    VOLUME = {190},
      YEAR = {1974},
     PAGES = {285--299},
      ISSN = {0002-9947,1088-6850},
   MRCLASS = {28A65 (54H20)},
  MRNUMBER = {352411},
MRREVIEWER = {J.\ Auslander},
       DOI = {10.2307/1996963},
       URL = {https://doi.org/10.2307/1996963},
}

@book {SinghManhas1993,
    AUTHOR = {Singh, R. K. and Manhas, J. S.},
     TITLE = {Composition {O}perators on {F}unction {S}paces},
    SERIES = {North-Holland Mathematics Studies},
    VOLUME = {179},
 PUBLISHER = {North-Holland Publishing Co., Amsterdam},
      YEAR = {1993},
     PAGES = {x+315},
      ISBN = {0-444-81593-7},
   MRCLASS = {47B38 (46E15 47-02)},
  MRNUMBER = {1246562},
MRREVIEWER = {Paul\ Bourdon},
}

@article {BCDMP18,
    AUTHOR = {Bernardes, Jr., Nilson C. and Cirilo, Patricia R. and Darji,
              Udayan B. and Messaoudi, Ali and Pujals, Enrique R.},
     TITLE = {Expansivity and shadowing in linear dynamics},
   JOURNAL = {J. Math. Anal. Appl.},
  FJOURNAL = {Journal of Mathematical Analysis and Applications},
    VOLUME = {461},
      YEAR = {2018},
    NUMBER = {1},
     PAGES = {796--816},
      ISSN = {0022-247X,1096-0813},
   MRCLASS = {37C50},
  MRNUMBER = {3759568},
MRREVIEWER = {Sophie\ Grivaux},
       DOI = {10.1016/j.jmaa.2017.11.059},
       URL = {https://doi.org/10.1016/j.jmaa.2017.11.059},
}

@article {BeMe21,
    AUTHOR = {Bernardes, Jr., Nilson C. and Messaoudi, Ali},
     TITLE = {Shadowing and structural stability for operators},
   JOURNAL = {Ergodic Theory Dynam. Systems},
  FJOURNAL = {Ergodic Theory and Dynamical Systems},
    VOLUME = {41},
      YEAR = {2021},
    NUMBER = {4},
     PAGES = {961--980},
      ISSN = {0143-3857,1469-4417},
   MRCLASS = {47A16 (37B99 37C20 37C50)},
  MRNUMBER = {4223416},
       DOI = {10.1017/etds.2019.107},
       URL = {https://doi.org/10.1017/etds.2019.107},
}

@article {BePe24,
    AUTHOR = {Bernardes, Jr., Nilson C. and Peris, Alfred},
     TITLE = {On shadowing and chain recurrence in linear dynamics},
   JOURNAL = {Adv. Math.},
  FJOURNAL = {Advances in Mathematics},
         Note = {Paper No. 109539, 46 pp.},
       VOLUME = {441},
        YEAR = {2024},
      ISSN = {0001-8708,1090-2082},
   MRCLASS = {37B65 (37B20 46A45 47A16 47B37)},
  MRNUMBER = {4708149},
       DOI = {10.1016/j.aim.2024.109539},
       URL = {https://doi.org/10.1016/j.aim.2024.109539},
}

@article {BCDFP25,
    AUTHOR = {Bernardes, Jr., Nilson C. and Caraballo, Blas M. and Darji,
              Udayan B. and F\'avaro, Vin\'icius V. and Peris, Alfred},
     TITLE = {Generalized hyperbolicity, stability and expansivity for
              operators on locally convex spaces},
   JOURNAL = {J. Funct. Anal.},
  FJOURNAL = {Journal of Functional Analysis},
    VOLUME = {288},
      YEAR = {2025},
        Note = {Paper No. 110696, 51 pp.},
      NUMBER = {2},
      ISSN = {0022-1236,1096-0783},
   MRCLASS = {47A16 (37B05 37B25 37B65 37C50 37D20)},
  MRNUMBER = {4799325},
MRREVIEWER = {Jan\ Hendrik\ Fourie},
       DOI = {10.1016/j.jfa.2024.110696},
       URL = {https://doi.org/10.1016/j.jfa.2024.110696},
}

@article {ABM21,
    AUTHOR = {Alves, Fabricio F. and Bernardes, Jr., Nilson C. and
              Messaoudi, Ali},
     TITLE = {Chain recurrence and average shadowing in dynamics},
   JOURNAL = {Monatsh. Math.},
  FJOURNAL = {Monatshefte f\"ur Mathematik},
    VOLUME = {196},
      YEAR = {2021},
    NUMBER = {4},
     PAGES = {665--697},
      ISSN = {0026-9255,1436-5081},
   MRCLASS = {37B20 (37B65 37C50 47A16)},
  MRNUMBER = {4331098},
       DOI = {10.1007/s00605-021-01617-6},
       URL = {https://doi.org/10.1007/s00605-021-01617-6},
}

@article {BONILLA_GROSSE-ERDMANN_2007,
    AUTHOR = {Bonilla, A. and Grosse-Erdmann, K.-G.},
     TITLE = {Frequently hypercyclic operators and vectors},
   JOURNAL = {Ergodic Theory Dynam. Systems},
  FJOURNAL = {Ergodic Theory and Dynamical Systems},
    VOLUME = {27},
      YEAR = {2007},
    NUMBER = {2},
     PAGES = {383--404},
      ISSN = {0143-3857,1469-4417},
   MRCLASS = {47A16 (37B05)},
  MRNUMBER = {2308137},
MRREVIEWER = {Sophie\ Grivaux},
       DOI = {10.1017/S014338570600085X},
       URL = {https://doi.org/10.1017/S014338570600085X},
}

@article {Omb94,
    AUTHOR = {Ombach, Jerzy},
     TITLE = {The shadowing lemma in the linear case},
   JOURNAL = {Univ. Iagel. Acta Math.},
  FJOURNAL = {Universitatis Iagellonicae. Acta Mathematica},
    NUMBER = {31},
      YEAR = {1994},
     PAGES = {69--74},
      ISSN = {0083-4386,2084-3828},
   MRCLASS = {58F15 (47A99)},
  MRNUMBER = {1290742},
MRREVIEWER = {Tadeusz\ Nadzieja},
}

@book {GrosseErdmannPeris2011,
    AUTHOR = {Grosse-Erdmann, Karl-G. and Peris Manguillot, Alfredo},
     TITLE = {Linear {C}haos},
    SERIES = {Universitext},
 PUBLISHER = {Springer, London},
      YEAR = {2011},
     PAGES = {xii+386},
      ISBN = {978-1-4471-2169-5},
   MRCLASS = {47-02 (37D45 47A16)},
  MRNUMBER = {2919812},
MRREVIEWER = {E.\ A.\ Gallardo-Guti\'errez},
       DOI = {10.1007/978-1-4471-2170-1},
       URL = {https://doi.org/10.1007/978-1-4471-2170-1},
}

@article{Pit,
	AUTHOR = {Pituk, Mih\'aly},
	TITLE = {Spectral characterization of shadowing for linear operators on {H}ilbert spaces},
	JOURNAL = {arXiv:2511.12272},
	YEAR = {2025}
}

@article {Grosse2000,
    AUTHOR = {Grosse-Erdmann, K.-G.},
     TITLE = {Hypercyclic and chaotic weighted shifts},
   JOURNAL = {Studia Math.},
  FJOURNAL = {Studia Mathematica},
    VOLUME = {139},
      YEAR = {2000},
    NUMBER = {1},
     PAGES = {47--68},
      ISSN = {0039-3223,1730-6337},
   MRCLASS = {47B37 (30E10 47A16 47B38)},
  MRNUMBER = {1763044},
MRREVIEWER = {Luis\ Bernal Gonz\'alez},
       DOI = {10.4064/sm-139-1-47-68},
       URL = {https://doi.org/10.4064/sm-139-1-47-68},
}

@article {LSV2026,
    AUTHOR = {Lupatini, Paulo and Carvalho Silva, Felipe and Var\~ao,
              R\'egis},
     TITLE = {Entropy for compact operators and results on entropy and
              specification},
   JOURNAL = {Monatsh. Math.},
  FJOURNAL = {Monatshefte f\"ur Mathematik},
    VOLUME = {209},
      YEAR = {2026},
    NUMBER = {3},
     PAGES = {497--511},
      ISSN = {0026-9255,1436-5081},
   MRCLASS = {47A16 (47A35 47B37)},
  MRNUMBER = {5033943},
       DOI = {10.1007/s00605-026-02158-6},
       URL = {https://doi.org/10.1007/s00605-026-02158-6},
}

@article {BBP2026,
    AUTHOR = {Bernardes, Jr., Nilson C. and Bonilla, Antonio and Pinto, Jo{\~a}o V. A.},
     TITLE = {On the dynamics of weighted composition operators},
   JOURNAL = {J. Math. Anal. Appl.},
  FJOURNAL = {Journal of Mathematical Analysis and Applications},
    VOLUME = {555},
      YEAR = {2026},
     Note = {Paper No. 130171, 45 pp.},
    NUMBER = {2},
      ISSN = {0022-247X,1096-0813},
   MRCLASS = {47B33 (47A16 47B37)},
  MRNUMBER = {4976736},
       DOI = {10.1016/j.jmaa.2025.130171},
       URL = {https://doi.org/10.1016/j.jmaa.2025.130171},
}

@article {GG2025,
    AUTHOR = {Gomes, Daniel and Grosse-Erdmann, Karl-G.},
     TITLE = {Kitai's criterion for composition operators},
   JOURNAL = {J. Math. Anal. Appl.},
  FJOURNAL = {Journal of Mathematical Analysis and Applications},
    VOLUME = {547},
      YEAR = {2025},
     Note = {Paper No. 129347, 28 pp.},
         NUMBER = {2},
      ISSN = {0022-247X,1096-0813},
   MRCLASS = {47B33 (37A25)},
  MRNUMBER = {4863798},
MRREVIEWER = {H\'ector\ Camilo\ Chaparro},
       DOI = {10.1016/j.jmaa.2025.129347},
       URL = {https://doi.org/10.1016/j.jmaa.2025.129347},
}

@article {AJK2025,
    AUTHOR = {Asensio, Vicente and Jord\'a, Enrique and Kalmes, Thomas},
     TITLE = {Power boundedness and related properties for weighted
              composition operators on {$\mathcal{S}(\Bbb{R}^d)$}},
   JOURNAL = {J. Funct. Anal.},
  FJOURNAL = {Journal of Functional Analysis},
    VOLUME = {288},
      YEAR = {2025},
     Note = {Paper No. 110745, 33 pp.},
         NUMBER = {3},
      ISSN = {0022-1236,1096-0783},
   MRCLASS = {47B33 (47A35)},
  MRNUMBER = {4823111},
MRREVIEWER = {Arkady\ K.\ Kitover},
       DOI = {10.1016/j.jfa.2024.110745},
       URL = {https://doi.org/10.1016/j.jfa.2024.110745},
}

\Addresses

\end{document}